\documentclass[a4paper,12pt]{article}
\usepackage{latexsym}	
\usepackage{amsmath}
\usepackage{amsthm}		

\usepackage{graphicx}

\usepackage{txfonts}
\usepackage{bm}
\usepackage{color}

\usepackage{comment}

\usepackage{geometry}
\numberwithin{equation}{section}

\newtheorem{theorem}{Theorem}[section]
\newtheorem{lemma}[theorem]{Lemma}
\newtheorem{prop}[theorem]{Proposition}

\newtheorem{remark}[theorem]{Remark}
\newtheorem{example}{Example}[section]

\newcommand{\R}{{\mathbb{R}}}

\newcommand{\Z}{{\mathbb{Z}}}

\newcommand{\0}{\bm{0}} 
\newcommand{\1}{\bm{1}}  
\newcommand{\calF}{{\mathcal{F}}}

\newcommand{\Lnat}{{L$^{\natural}$}}
\newcommand{\Mnat}{{M$^{\natural}$}}
\newcommand{\Rinf}{\R \cup \{+ \infty\}}

\newcommand{\pminmin}{p^*_{\rm min}}
\newcommand{\pminminL}{p^{\small \dagger}_{\rm min}} 
\newcommand{\pminL}{p^{\small \dagger}} 

\newcommand{\dom}{{\rm dom\,}}

\newcommand{\suppp}{{\rm supp}\sp{+}}
\newcommand{\suppm}{{\rm supp}\sp{-}}

\newcommand{\Lm}{L}

\newcommand{\OMIT}[1]{{\bf [OMIT:} #1 \ {\bf --- end OMIT] }}  
   \renewcommand{\OMIT}[1]{}            

\begin{document}

\title{
Extension of Excess Demand Ascending Auction \\
to Multi-Demand Model \\
by Discrete Convex Analysis Approach
}

\author{
Kazuo Murota%
\thanks{
Faculty of Economics and Business Administration,
Tokyo Metropolitan University, 
Tokyo 192-0397, Japan,
murota@tmu.ac.jp;
The Institute of Statistical Mathematics, Tokyo 190-8562, Japan.
}
\ and Akiyoshi Shioura%
\thanks{
Department of Industrial Engineering and Economics, 
Institute of Science Tokyo,
Tokyo 152-8550, Japan,
shioura@iee.eng.isct.ac.jp
}%
}


\date{April 2026 / July 2026 / September 2026}


\maketitle

\begin{abstract}
We consider the problem of finding the (unique) minimal Walrasian equilibrium price 
in multi-item, multi-unit auction models:
there are multiple indivisible items for sale, 
with several units of each item, and a bidder may be interested in buying more than one copy of each item.
In its special case with unit-demand bidders,
where each bidder demands at most one unit of any item,
Andersson, Andersson, and Talman (2013)  proposed 
a general framework of ascending auction algorithms
based on the concept of excess-demand item set.
This paper extends this approach to the multi-unit case 
by exploiting the discrete convexity
of the Lyapunov function associated with the auction model.
In particular, we make use of the facts that 
(i) the equilibrium price vectors are characterized as the minimizers of the Lyapunov function,
(ii) the Lyapunov function is an instance of an \Lnat-convex function,
and (iii)
a concept generalizing ``excess-demand item set'' can be defined 
in \Lnat-convex function minimization in general.
\end{abstract}

{\bf Keywords}:
ascending auction,  
Walrasian equilibrium, 
indivisible goods,
Lyapunov function, 
discrete optimization,
discrete convex analysis


\section{Introduction}
\label{sec:intro}


We consider the problem of finding a Walrasian equilibrium price 
in multi-item, multi-unit auction models:
there are multiple indivisible items (goods) for sale, 
with several units of each item, and a bidder may be interested in buying more than one copy of each item.
We focus on ascending auction algorithms, in which the price vector is increased monotonically
to the (unique) minimal Walrasian equilibrium price.

An extensive study has been made on the special case with unit-demand bidders,
where each bidder demands at most one unit of any item.
It is known that there exists a unique minimum equilibrium price vector
and it coincides with the VCG price, which is strategyproof.
This significant property motivates us to compute
the minimum equilibrium price.

Demange, Gale, and  Sotomayor \cite{DGS86} 
gave the first ascending auction algorithm
for computing the minimum equilibrium price in the unit-demand model. 
This algorithm repeatedly
increases the price vector in the components
corresponding to a minimal overdemanded item set,
where a set of items is called overdemanded 
if the number of bidders demanding only items in the set exceeds the number of items in the set;
see \eqref{eqn:overdemand-unit-def} in Section~\ref{sec:review-auction}.
Although this algorithm is guaranteed to compute the minimum equilibrium price,
it has drawbacks that a minimal overdemanded set itself is not easy to compute
and the number of iterations tends to be large.

These drawbacks are resolved in the algorithm of 
Mo, Tsai, and  Lin \cite{MTL88}
(see also Sankaran \cite{San94} and 
Andersson, Andersson, and Talman \cite{AAT13}).  
Instead of a minimal overdemanded set,
this algorithm employs 
a particular overdemanded set
that can be computed with a variant of Ford and Fulkerson's labeling algorithm.
This item set is known to be the largest member 
of the so-called ``purely overdemanded sets''
(\cite[Props.~1 and 2]{MTL88}).
Furthermore, it is shown
by Andersson and Erlanson \cite{AE13} and 
Murota, Shioura, and Yang \cite{MSY13,MSY16},
independently, 
that the number of iterations is the smallest possible.

Andersson et al.~\cite{AAT13}
introduced the concept of excess-demand sets
to present a general framework 
of ascending algorithms
for computing the minimum equilibrium price;
see \eqref{eqn:def-excess-demand} for the definition of an excess-demand set.
The algorithms of 
Demange et al.~\cite{DGS86}
and Mo et al.~\cite{MTL88}
are both captured in this framework,
because a minimal overdemanded set is an excess-demand set
(see Prop.~\ref{prop:excess-demand-overdemanded}(2))
and a purely overdemanded set is 
a synonym of an excess-demand set (\cite[p.~29]{AAT13}).
The framework of Andersson et al.~\cite{AAT13} adds more flexibility
in the choice of an item set,
possibly at the cost of increasing the number of iterations.
The item sets used in the ascending auction algorithm for the unit-demand model
are summarized in Table~\ref{table:unit-demand-itemset}.

\begin{table} 
\begin{center}
\caption{Item sets used in the auction algorithm for the unit-demand model}
\label{table:unit-demand-itemset}
\medskip

\begin{tabular}{ll}
 Authors &  Item set used in the auction algorithm 
\\ \hline
Demange et al.  \cite{DGS86} & (any) minimal overdemanded set 
\\
Mo et al.  \cite{MTL88} & 
(unique) maximal purely overdemanded set 
\\
&   = (unique) maximal excess-demand set
\\
Andersson et al.  \cite{AAT13} & (any) excess-demand set
\\ \hline
 \end{tabular}
\end{center}
\end{table}

The objective of this paper is to show that the flexible ``any excess-demand set'' framework 
also applies to the multi-demand setting.
Specifically, we extend the auction algorithm framework of Andersson et al.~\cite{AAT13} 
to the multi-demand model with strong gross substitutes valuations
by appropriately defining excess-demand sets. 
We derive this extension by exploiting  
the \Lnat-convexity of the Lyapunov function associated with the auction model,
where \Lnat-convexity is one of the central discrete convexity concepts in discrete convex analysis 
\cite{Murota03book,Murota16,Murota24}
(described in Section~\ref{sec:Lnat-minimize}).
We emphasize that this paper is concerned with a structural generalization of the
auction algorithm rather than the development of a new, faster algorithm.

The multi-demand model allows bidders to obtain 
multiple items with several units of each item.
To describe the model more precisely,
denote by $N= \{1, 2, \ldots, n\}$ 
the set of item types and by 
$u(i)$
the number of supplied units of item type $i \in N$. 
Then the items obtained by a bidder $b$
is represented by a vector $x_{b} \in \Z^n$ of nonnegative integers,
and an allocation of the auction is represented by a set of such vectors $\{ x_{b} \}$
satisfying $\sum_{b} x_{b} = u$,
where
$u$ denotes $(u(1), u(2), \ldots, u(n))$,
which is a vector of positive integers.
A special case of 
the multi-demand model
in which
$u = \1$ $(=(1,1,\ldots,1))$
and each $x_{b}$ is a unit vector
($x_{b}\in \{0,1\}^n$, $\| x_{b} \|_{1} = 1$)
is nothing but the unit-demand model.
Another model,
lying between the unit-demand and multi-demand models,
is the unit-supply model,
in which
$u$ is a vector of positive integers and
$x_{b}\in \{0,1\}^n$
for each $b$.
The unit-supply model is studied more extensively in the literature
(e.g., Kelso and Crawford \cite{KC82},
Gul and Stacchetti  \cite{GS99})
than the (general) multi-demand model.

While an equilibrium always exists in the unit-demand model,
this is not the case with 
the multi-demand model or the unit-supply model.
It is known 
(see Section~\ref{sec:model-multi} for details)
that an equilibrium exists 
in the multi-demand model
if bidders' valuation functions satisfy the strong gross substitutes condition,
and that this condition is equivalent to \Mnat-concavity,
which is 
another discrete convexity concept
 in discrete convex analysis.
Under this condition,
there exists a unique minimal member 
among equilibrium price vectors.
The minimum equilibrium price does not generally coincide with the VCG price
in the multi-demand model
(see, e.g., Gul and Stacchetti  \cite{GS00}), 
and accordingly, truthfulness is not guaranteed 
(Ausubel, Cramton, Pycia, Rostek, and Weretka \cite{ACPRW14}). 
Nevertheless, the minimum equilibrium price has several desirable properties; 
in particular, it is the best Pareto-optimal price for bidders.
We always assume 
the strong gross substitutes condition
when we consider the multi-demand or unit-supply models.

In this paper, we are interested in 
extending the 
algorithms in Table~\ref{table:unit-demand-itemset}
to those for finding the minimal equilibrium price
in the multi-demand model.
Gul and Stacchetti \cite{GS00} 
gave an ascending auction algorithm for the unit-supply model 
and Ausubel~\cite{Aus06} for the (general) multi-demand model.
When used in the unit-demand model,
both of these algorithms
produce the same trajectory (ascending sequence) of the price vector
as the algorithm of Mo et al.~\cite{MTL88}.
However, no ascending auction algorithms have been given 
for the multi-demand model
that correspond to the algorithm 
of Demange et al.~\cite{DGS86} or that of 
Andersson et al.~\cite{AAT13}
for the unit-demand model.

Our approach is based on the following fundamental connection between auctions and \Lnat-convex functions:
\begin{itemize}
\item
An equilibrium price is characterized as a minimizer of
the Lyapunov function associated with the model
(Ausubel \cite{Aus06}).

\item
The Lyapunov function is an \Lnat-convex function
(Murota--Shioura--Yang \cite{MSY16}).
\end{itemize}
We thus follow the approach expounded in Murota
\cite[Sec.~8]{Murota16}, \cite[Sec.~17.3]{Murota24} and
Bichler,  Fichtl, and Schwarz \cite{BFS21}.
We emphasize that the above facts hold true
both in the unit-demand model
and in the multi-demand model with the strong gross substitutes property.

Instead of directly extending the excess-demand auction algorithm, 
we make a detour to the Lyapunov function minimization
to obtain the excess-demand auction algorithm for the multi-item model;
see Table~\ref{table:secorg}.
We carry out this idea in the following three steps.

\begin{table} 
\begin{center}
\caption{Derivation of the auction algorithm for the multi-demand model}
\label{table:secorg}

\medskip

\addtolength{\tabcolsep}{-2pt}
 \begin{tabular}{cccc}
 Model & Auction algorithm &  & Function minimization
\\ 
 & using excess-demand set  & & (Lyapunov / \Lnat-convex)
\\
\hline
Unit-demand model & Sec.~\ref{sec:review-auction} (review) 
& $\longrightarrow$ & Sec.~\ref{sec:Lyap-unit} (Lyapunov)
\\
   &   &   &  $\downarrow$
\\
   &   &   &  Sec.~\ref{sec:Lnat-minimize} (\Lnat-convex)
\\
   &   &   &  $\downarrow$
\\
Multi-demand model & Sec.~\ref{sec:auction-multi} (main results) 
& $\longleftarrow$ & Sec.~\ref{sec:Lyap-multi} (Lyapunov)
\\ \hline
 \end{tabular}
\addtolength{\tabcolsep}{+2pt}
\end{center}
\end{table}

\paragraph{1st step:}
By reformulating the concepts of excess-demand set 
in terms of the Lyapunov function,
we rewrite the ascending auction algorithm
for the unit-demand model 
to a Lyapunov function minimization algorithm.
For each variant 
of the ascending auction algorithm
in Table \ref{table:unit-demand-itemset},
we obtain a corresponding variant of
the Lyapunov function minimization algorithm.

\paragraph{2nd step:}
We show that the Lyapunov function minimization algorithms 
for the unit-demand model
(obtained in the 1st step)
can be used,
without substantial changes, 
as algorithms to find the minimal minimizer of a general \Lnat-convex function.

\paragraph{3rd step:}
By applying the \Lnat-convex function minimization algorithm
(constructed in the 2nd step)
to the Lyapunov function of the multi-demand model,
we obtain the Lyapunov function minimization algorithm
for finding the minimal equilibrium price
in the multi-demand model.
Then we turn the Lyapunov function minimization algorithm
into an excess-demand auction algorithm
by appropriately defining the concepts of deficiency, overdemanded sets, and excess-demand sets
in the multi-demand setting in terms of the Lyapunov function.

\medskip

The organization of this paper is as follows.
Section~\ref{sec:review-auction} 
explains the unit-demand auction model, 
followed by a review on the existing ascending auction algorithms 
using excess-demand item sets.
Section~\ref{sec:auction-multi} describes the main results
of this paper, showing that,  
by appropriately defining ``excess-demand set'' in the multi-demand setting,
we can extend the existing algorithms for the unit-demand model
to those for the multi-demand model.
The subsequent three sections,
Sections \ref{sec:Lyap-unit} to \ref{sec:Lyap-multi}, 
are devoted to the justification of the main results
with the aid of discrete convex analysis.
In Section~\ref{sec:Lyap-unit}, 
the ascending auction algorithm using excess-demand sets 
for the unit-demand model is 
reformulated to a minimization algorithm of the Lyapunov function
in the unit-demand model.
This minimization algorithm is extended,  
in Section~\ref{sec:Lnat-minimize}, 
to general \Lnat-convex functions.
Finally, in Section~\ref{sec:Lyap-multi},
this general algorithm is tailored 
to the Lyapunov function in the multi-demand model
to obtain excess-demand ascending auction algorithms for the multi-demand model.
The structure of this paper is schematized in Table~\ref{table:secorg}.

\subsection*{Related work}
Several recent papers on auctions make effective use of concepts and techniques 
in discrete and continuous optimization
to improve computational efficiency.
Paes Leme and Wong \cite{PLW20wal} 
present a polynomial algorithm for equilibrium computation
in an aggregate demand oracle model
by making judicious use of subgradients
as well as another polynomial algorithm in the value oracle model.
Eickhoff, McCormick, Peis, Rieken, and Vargas Koch \cite{EMPRK24}
give a network flow-based algorithm 
for a specific type of gross substitutes valuation,
called a truncated additive valuation.
Eickhoff, Neuwohner, Peis, Rieken, Vargas Koch, and V{\'e}gh \cite{ENPRKV25} 
present a primal-dual framework that features 
the polymatroid sum problem, 
in place of \Lnat-function minimization,
in identifying the minimal maximally overdemanded set
(= the minimal maximizer of deficiency $\delta$ in \eqref{eqn:def-OYp}
in our paper). 
Baldwin, Goldberg, Klemperer, and Lock \cite{BGKL24}
present a polynomial algorithm for computing equilibrium allocations
in product-mix auctions,
in which the valuations of the bidders are expressed as bid vectors.
Fujishige and Yang \cite{FY26}
propose dynamic auctions 
applicable to valuations of all unimodular demand types,
where gross substitutes valuations are a special case of such valuations.

\section{Review of ascending auction algorithms for unit-demand model}
\label{sec:review-auction}


 In this section, we review the unit-demand auction model and
the ascending auction algorithms 
for computing the unique minimal Walrasian price vector in that model.

\subsection{Unit-demand auction model and Walrasian equilibrium}
\label{sec:unit-equil}

 In the unit-demand auction model,
each bidder demands at most one item.
 We assume that there are 
$m$ bidders and $n$ items, and denote the sets of bidders and items as
$M=\{1, 2,\ldots, m\}$ and $N= \{1, 2, \ldots, n\}$, respectively.
 Each item $i \in N$ is available only in one unit,
and its  value for bidder $b \in M$ 
is denoted as
$v_b(i) \in \Z_+$.
 In addition, we consider an artificial item $0$
that has no value (i.e., $v_b(0)=0$ for 
all $b \in M$)
and is available in  infinite number of units.
We denote $N^0=N \cup \{0\}$.

 Given a price vector $p \in \Z^n_+$, 
each bidder $b \in M$ demands
an item $i \in N^0$
that maximizes the value $v_b(i) - p(i)$, where $p(0) = 0$ by convention.
 For each bidder $b$ and a price vector $p \in \Z^n_+$, 
define the demand set  $\widetilde{D}_b(p)$ as
\begin{align*}
  \widetilde{D}_b(p)
& = \arg\max\{v_b(i) - p(i)  \mid i \in N^0  \}\\
& = \{i \in N^0 \mid  v_b(i) - p(i) \geq v_b(j) - p(j)
 \ (\forall j \in N^0 )  \} .
\end{align*}
Note that $\widetilde{D}_b(p)$ is a nonempty subset of $N^0$ for any $p$.

 An \textit{allocation} of items to bidders is defined as a mapping
$\pi: M \to N^0$ such that $\pi(b) \ne \pi(b')$ 
for every distinct $b, b' \in M$
with $\pi(b) \in N$ and $\pi(b') \in N$;
this condition means that each item is allocated to at most one bidder.
 The auctioneer wants to find 
a pair of a price vector $p^* \in \Z_{+}^n$ and an allocation $\pi$
such that $\pi(b) \in \widetilde{D}_b(p^*)$ for every $b \in M$
and $p^*(i)=0$ for every item $i \in N \setminus \{ \pi(b) \mid b \in M \}$.
Such a pair 
is called a \textit{(Walrasian) equilibrium} and 
$p^*$ is a \textit{(Walrasian) equilibrium price vector}.
 An equilibrium price vector
always exists, and a minimal equilibrium price vector
is uniquely determined (see, e.g., \cite{DG1985,SS1972}),
which is denoted as $\pminmin$ in the following.

\subsection{Ascending auction algorithms}
\label{sec:unit-alg}

 We start with a generic form of an ascending auction algorithm 
(Algorithm 1 in \cite{AAT13}).
 Let $p \in \Z^n_+$ be a price vector and $Y \subseteq N$ an item  set.  
 We define
\begin{equation}
\label{eqn:def-OYp}
  O(Y, p)  = \{b \in M \mid \widetilde{D}_b(p) \subseteq Y\},
\end{equation}
which denotes the set of bidders who only demand items in $Y$ at price $p$.
The
\textit{deficiency} of $Y$  
is defined as the value of
\begin{equation}
\label{eqn:deficiency-unit-def}
\delta(Y;p)=
|O(Y, p)|-|Y|.
\end{equation}
 If the deficiency of $Y$ is positive, that is,
\begin{equation}
\label{eqn:overdemand-unit-def}
 |O(Y, p)| > |Y|,
\end{equation}
at least one bidder in $O(Y,p)$ cannot get any item in~$Y$;
such an item set $Y$ is said to be
\textit{overdemanded} at $p$.
Note that an overdemanded set is necessarily a nonempty set,
since $\widetilde{D}_b(p) \ne \emptyset$ for any $p$ and $b \in M$.

The following is a basic ascending auction algorithm,
which aims to eliminate overdemanded sets
by iteratively increasing the price vector $p$.
For any $X \subseteq N$,
we write $\chi_X \in \{0,1\}^n$ for the characteristic vector of $X$.

\medskip

\noindent
{\bf Algorithm} {\sc Ascending Auction} (generic form)
\\
Step~0:  Set $p:=p^\circ$, where $p^\circ \in \Z^n_+$ 
is a vector satisfying $p^\circ \le \pminmin$. 
\\
Step~1:
If there is no overdemanded set at price $p$,  then stop.
\\ 
Step~2:
 Find an item set $X \subseteq N$ that is overdemanded at price $p$.
\\
Step~3:
 Set $p:=p + \chi_X$ and go to Step~1.

\medskip

This algorithm, {\sc Ascending Auction}, terminates after a finite number of iterations
since all overdemanded sets disappear when the prices become sufficiently large.
However, the vector $p$ obtained at the termination of the algorithm
may not be an equilibrium price,
since the nonexistence of overdemanded sets does not imply equilibrium.

With a judicious choice of the item set $X$ in Step~2,
the minimal equilibrium price vector  $\pminmin$
can be obtained by the above algorithm.
We explain this in three steps
according to the historical development:
(i) Demange et al.~\cite{DGS86} (1986),
(ii) Mo et al.~\cite{MTL88} (1988),
and (iii) Andersson, Andersson, and Talman \cite{AAT13} (2013).

Demange et al.~\cite{DGS86} 
proposed selecting a \textit{minimal} overdemanded set for the item set $X$ in Step~2
and showed that this choice yields
the minimal equilibrium price $\pminmin$.
It is noted that a minimal overdemanded set is not uniquely determined;
see Example \ref{ex: excess-demand}
at the end of this section.

\begin{theorem}[\cite{DGS86}]
\label{thm:min-overdemand-unit}
 The algorithm {\sc Ascending Auction} finds 
the minimal equilibrium price vector  $\pminmin$
if, in each iteration, the item set $X$ 
selected in Step~2
is a minimal overdemanded set at price~$p$.
\end{theorem}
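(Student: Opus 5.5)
The proof rests on two facts. First, the update preserves the invariant $p \le \pminmin$. Second, at termination the price is an equilibrium. Together these give the conclusion: a terminal price that is an equilibrium and is $\le \pminmin$ must equal $\pminmin$, by minimality. Termination itself is already noted in the text, since prices increase by integer steps and overdemanded sets disappear at large prices. The invariant holds initially by Step~0.

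\textbf{Invariant.} Suppose $p \le \pminmin$ and $X$ is a minimal overdemanded set at $p$. Assume for contradiction that $p+\chi_X \not\le \pminmin$. Then
\[
X_0=\{i\in X \mid p(i)=\pminmin(i)\}\neq\emptyset .
\]
The argument has three steps.
\begin{itemize}
\item Minimality of $X$ gives a Hall-type statement. For every nonempty proper subset $Z\subsetneq X$ we have $|O(Z,p)|\le |Z|$. From this one shows that for every $Z\subseteq X$ with $Z \neq \emptyset$, the number of bidders $b\in O(X,p)$ whose demand meets $X\setminus Z$ is at least $|X\setminus Z|+1$. The key case is $Z=X\setminus X_0$: the bidders in $O(X,p)$ whose demand sets meet $X_0$ number at least $|X_0|+1$. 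To get this, apply minimality to $Z=X\setminus X_0$, noting $|O(X,p)|\ge |X|+1$. If $X_0=X$, use overdemandedness directly.
\item Take any bidder $b\in O(X,p)$ with $\widetilde D_b(p)\cap X_0\neq\emptyset$, and pass from $p$ to $q:=\pminmin$. Prices on $X_0$ are unchanged, while prices weakly increase elsewhere. Hence items in $X_0\cap\widetilde D_b(p)$ remain optimal for $b$ at $q$. Moreover $\widetilde D_b(q)\subseteq (\widetilde D_b(p)\cap X_0)\cup\{\text{items whose price rose}\}$. Thus every such $b$ demands only items of $X_0$ at $q$, plus possibly items outside $\widetilde D_b(p)$ that are at least as good. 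I would refine this to: $b$'s demand at $q$ is contained in $X_0\cup(N\setminus X)$. The essential point is that any item $j\notin X_0$ that $b$ demands at $q$ must have had $v_b(j)-p(j)<\max$ at $p$ unless $j\in\widetilde D_b(p)\subseteq X$, in which case $j \in X\setminus X_0$ and its price strictly increased, so $j$ is worse. This gives $\widetilde D_b(q)\subseteq X_0$ exactly.
\item So at least $|X_0|+1$ bidders at the equilibrium price $q$ demand only items of $X_0$. This contradicts the existence of an equilibrium allocation, in which each of them receives a distinct item of $X_0$. Hence $p+\chi_X\le\pminmin$.
\end{itemize}

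\textbf{Termination yields equilibrium.} When no overdemanded set exists at $p$, Hall's theorem on the bipartite demand graph gives an allocation in which every bidder receives a demanded item. Here the dummy item $0$ has unlimited supply, and the Hall condition is exactly $|O(Y,p)|\le|Y|$ for all $Y\subseteq N$. It remains to show that every unsold item has price $0$. I would choose a maximum matching with respect to priced items: an augmenting-path argument that uses $p\le\pminmin$ and the structure of the Walrasian equilibrium at $\pminmin$ shows that the items with $p(i)>0$ can all be covered. Alternatively, take the minimal equilibrium $\pminmin$ and argue directly that $p=\pminmin$, by showing that $p$ is an equilibrium price and invoking $p\le\pminmin$.

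\textbf{Main obstacle.} I expect the main difficulty to be the second bullet of the invariant step, which requires careful handling of demand changes between $p$ and $\pminmin$. For that step I would first try the cleanest route: define $Y=\{i \mid p(i)=\pminmin(i)\}$ and show $O(X_0,\pminmin)\supseteq\{b\in O(X,p)\mid \widetilde D_b(p)\cap X_0\neq\emptyset\}$.
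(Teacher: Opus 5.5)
Your invariant step is correct. It is essentially an elementary version of the paper's Lemma~\ref{lem:donotexceed}, whose \Lnat-convex framework (Theorem~\ref{thm:excess-Lnat}(i) applied to the unit-demand Lyapunov function) is how the paper recovers this cited result. Your $X_0$ is exactly the set $Z=\arg\max_i\{(p+\chi_X)(i)-\pminmin(i)\}$ used there, and your first bullet is the inequality $|O(X,p)|-|O(X\setminus X_0,p)|>|X_0|$. Your second and third bullets replace the \Lnat-exchange inequality by showing directly that $X_0$ is overdemanded at $\pminmin$. The second bullet is worded confusingly, but the clean fact holds: an item of $X_0\cap\widetilde D_b(p)$ keeps its price, so $b$'s maximum utility is the same at $p$ and at $\pminmin$, whence $\widetilde D_b(\pminmin)\subseteq\{j\in\widetilde D_b(p)\mid \pminmin(j)=p(j)\}\subseteq X_0$.

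The genuine gap is the half you treat as routine: that the terminal $p$ is an equilibrium. As the paper stresses, absence of overdemanded sets does not imply equilibrium. Hall's theorem only saturates the bidders; you must also cover every item with $p(i)>0$, and that needs $p\le\pminmin$ in an essential way. Your ``augmenting-path argument'' is never specified, and the alternative (show $p$ is an equilibrium, then use $p\le\pminmin$) merely restates the goal. In the paper's framework this step is Lemma~\ref{lem:localopt}. There the \Lnat-inequality with $\lambda=1$, applied to $p-\chi_X$ and $\pminmin$, yields $L(p-\chi_X)\ge L(p)$. In the unit-demand model this says $|U(X,p)|\ge|X|$ for every set $X$ of positively priced items, and the local optimality criterion then makes $p$ a minimizer of $L$.

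If you want to stay elementary, you can close the step as follows.
Let $q=\pminmin$ with equilibrium allocation $\pi$.
Put $d=q-p\ge\0$ (with $d(0)=0$), let $u_b(\cdot)$ be $b$'s maximum utility over $N^0$, and set $\Delta_b=u_b(p)-u_b(q)\ge 0$.
Then $d(j)\ge\Delta_b$ for $j\in\widetilde D_b(p)$, and $d(j)\le\Delta_b$ for $j\in\widetilde D_b(q)$.
Let $S=\{j\in N\mid d(j)>0\}$ and $B_+=\{b\in M\mid \Delta_b>0\}$.
\begin{itemize}
\item The first inequality gives $B_+\subseteq O(S,p)$. Since $S$ is not overdemanded at $p$, $|B_+|\le|S|$.
\item Every $j\in S$ has $q(j)>0$, so it is sold under $\pi$; by the second inequality its buyer lies in $B_+$. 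Hence $\pi^{-1}(S)=B_+$ and $|B_+|=|S|$.
\item Each $b\notin B_+$ keeps $\pi(b)$. This item has $d(\pi(b))=0=\Delta_b$, so it is still demanded at $p$.
\item For $A\subseteq B_+$, the items demanded at $p$ lie in some $\Gamma\subseteq S$ with $A\subseteq O(\Gamma,p)$, so $|A|\le|\Gamma|$. By Hall's theorem, $B_+$ is matched perfectly onto $S$ in the demand graph at $p$.
\item Any item left unsold lies outside $S$ and is unsold under $\pi$, so $p(j)=q(j)=0$.
\end{itemize}
Thus $p$ is an equilibrium price, and $p\le\pminmin$ forces $p=\pminmin$.
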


 Use of excess-demand sets in {\sc Ascending Auction} was initiated by 
Mo et al.~\cite{MTL88} (see also Sankaran \cite{San94} and Andersson et al.~\cite{AAT13}).
 For a price vector $p \in \Z^n_+$ and an item  set $Y \subseteq N$,  
 define
\begin{equation}
\label{eqn:def-UYp}
 U(Y, p)  = \{b \in M \mid \widetilde{D}_b(p) \cap Y \neq \emptyset\},
\end{equation}
which denotes the set of bidders who demand some item in $Y$ at price~$p$.
We have $U(Y, p) \supseteq O(Y,p)$ from \eqref{eqn:def-OYp} and \eqref{eqn:def-UYp}.

A nonempty set $X \subseteq N$ is said to be an \textit{excess-demand set}%
\footnote{
An excess-demand set is called a \textit{purely overdemanded set} in Mo et al.~\cite{MTL88}
and a \textit{strictly overdemanded set} in D{\"u}tting et al.~\cite{DHW16}.
Alkan \cite{Alk92} uses the term  ``overdemanded set'' 
to mean an excess-demand set (not our overdemanded set),
and Gul and Stacchetti \cite{GS00} use the term ``excess-demand set'' to mean
the unique maximal excess-demand set (not our excess-demand set).
}
at price $p$ if it satisfies
\begin{equation}
\label{eqn:def-excess-demand}
  |U(Z, p) \cap O(X, p)| > |Z| \qquad (\emptyset \neq \forall Z \subseteq X).
\end{equation}
 Here, $|U(Z, p) \cap O(X, p)|$ is the maximum number of items the bidders
are willing to take from the set $Z\ (\subseteq X)$,
under the assumption that only the bidders in $O(X, p)$
get items in $X$.
 Hence, the condition \eqref{eqn:def-excess-demand}
 means that
for every nonempty subset $Z$ of $X$,
the number of items in $Z$ demanded by the bidders
is strictly larger than the size of $Z$.
An alternative expression of the condition \eqref{eqn:def-excess-demand}, 
without involving $U(\cdot, p)$, is given by
\begin{equation}
\label{eqn:def-excess-demand2}
|O(X, p)| - |O(X \setminus Z, p)|
 > |Z| \qquad (\emptyset \neq \forall Z \subseteq X) ,
\end{equation}
which is indeed equivalent to \eqref{eqn:def-excess-demand} since
\begin{align}
& U(Z, p) \cap O(X, p) =  O(X, p)\setminus O(X \setminus Z, p),
\label{eqn:def-excess-demand3a}
\\ 
& O(X,p)  \supseteq O(X \setminus Z, p)
\label{eqn:def-excess-demand3b}
\end{align}
for every nonempty subset $Z$ of $X$.
The alternative expression \eqref{eqn:def-excess-demand2}
will be used in Section~\ref{sec:auction-alg--multi} 
to extend the notion of excess-demand set to the multi-demand model.

Various properties of excess-demand sets,
including the relationship with overdemanded sets,
are listed in the following proposition.
It is noteworthy that
there is a unique maximal excess-demand set,
while there may be several different minimal excess-demand sets;
see Example \ref{ex: excess-demand}
at the end of this section.

\begin{prop}[cf.~\cite{AAT13,vdLY16,MTL88}]
\label{prop:excess-demand-overdemanded}
\ \\
{\rm (1)}
An excess-demand set is an overdemanded set.
\\
{\rm (2)}
A minimal overdemanded set is an excess-demand set.
\\
{\rm (3)}
An item set is a minimal excess-demand set if and only if
it is a minimal overdemanded set.
(This is a consequence of {\rm (1)} and {\rm (2)}  above.)
\\
{\rm (4)}
For every excess-demand sets $X$ and  $Y$, 
their union $X \cup Y$ is also an excess-demand set.
Consequently, the maximal excess-demand set is uniquely determined.
\\
{\rm (5)}
An item set $X$ is the unique maximal  excess-demand set 
if and only if 
it is the unique minimal maximizer of the deficiency
$\delta(X;p)=|O(X,p)| - |X|$.
\end{prop}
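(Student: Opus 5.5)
The whole proposition can be run on one function, $f(Y) = |O(Y,p)|$ for $Y \subseteq N$, and in particular on its supermodularity. The plan is to prove this first, as a lemma, and then derive (1), (2), (4), (5) from \eqref{eqn:def-excess-demand2}; (3) is the stated consequence of (1) and (2).

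\emph{Supermodularity.} Since $b\in O(Y,p)$ iff $\widetilde{D}_b(p)\subseteq Y$, we have
$O(X\cap Y,p)=O(X,p)\cap O(Y,p)$ and $O(X\cup Y,p)\supseteq O(X,p)\cup O(Y,p)$. Inclusion--exclusion then gives
\begin{equation*}
|O(X\cup Y,p)|+|O(X\cap Y,p)| \ge |O(X,p)|+|O(Y,p)|.
\end{equation*}
Because $|\cdot|$ is modular, the deficiency $\delta(\cdot\,;p)$ is supermodular as well.

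\emph{Part (1).} Take $Z=X$ in \eqref{eqn:def-excess-demand2}. Since $O(\emptyset,p)=\emptyset$ (demand sets are nonempty), this gives $|O(X,p)|>|X|$, which is exactly overdemand.

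\emph{Part (2).} Let $X$ be a minimal overdemanded set, so $|O(X,p)|\ge |X|+1$. For nonempty $Z\subseteq X$, the set $X\setminus Z$ is a proper subset of $X$, hence not overdemanded by minimality. This holds also when $X\setminus Z=\emptyset$, since the empty set is never overdemanded. Therefore $|O(X\setminus Z,p)|\le |X|-|Z|$, and
\begin{equation*}
|O(X,p)|-|O(X\setminus Z,p)|\ge |X|+1-|X|+|Z|>|Z|.
\end{equation*}

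\emph{Part (4).} Let $X,Y$ be excess-demand sets and let $\emptyset\ne Z\subseteq X\cup Y$. Split $Z = Z_1\cup Z_2$ with $Z_1=Z\cap X$ and $Z_2=Z\setminus X\subseteq Y$. Telescoping,
\begin{equation*}
|O(X\cup Y)|-|O((X\cup Y)\setminus Z)| = \bigl[|O(X\cup Y)|-|O((X\cup Y)\setminus Z_2)|\bigr]+\bigl[|O((X\cup Y)\setminus Z_2)|-|O((X\cup Y)\setminus Z)|\bigr].
\end{equation*}
Set $A=(X\cup Y)\setminus Z_2$ and $B=Y$. Then $A\cup B=X\cup Y$ and $A\cap B=Y\setminus Z_2$. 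Supermodularity gives $|O(X\cup Y)|-|O(A)|\ge |O(Y)|-|O(Y\setminus Z_2)|$, which is $>|Z_2|$ if $Z_2\ne\emptyset$ and $\ge 0$ otherwise. Similarly, with $A'=A\setminus Z_1$ and $B'=X$, we have $A'\cup B'=A$ (since $Z_1\subseteq X$) and $A'\cap B'=X\setminus Z_1$. So the second bracket is at least $|O(X)|-|O(X\setminus Z_1)|$, which is $>|Z_1|$ if $Z_1\neq\emptyset$. At least one of $Z_1,Z_2$ is nonempty and the brackets are integers, so the sum exceeds $|Z_1|+|Z_2|=|Z|$. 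Thus $X\cup Y$ is an excess-demand set. Uniqueness of the maximal one follows because $N$ is finite and the family is closed under union.

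\emph{Part (5).} Note that \eqref{eqn:def-excess-demand2} says $\delta(X)>\delta(X\setminus Z)$ for all nonempty $Z\subseteq X$, i.e., $X$ is the unique maximizer of $\delta$ among its own subsets.

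Let $S$ be the minimal maximizer of $\delta$; it is unique because supermodularity closes the maximizers under intersection. Every subset of $S$ has strictly smaller $\delta$ by minimality, so $S$ is an excess-demand set if nonempty, hence $S\subseteq X^*$, the maximal excess-demand set.

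For the converse inclusion, apply supermodularity to $X^*$ and $S$:
\begin{equation*}
\delta(X^*\cup S)-\delta(S)\ge \delta(X^*)-\delta(X^*\cap S).
\end{equation*}
If $X^*\not\subseteq S$, the right side is $>0$ by the excess-demand property, contradicting the maximality of $\delta(S)$. Hence $X^*\subseteq S$, and so $X^*=S$.

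\emph{Main obstacle.} The main difficulty is the nonempty caveat in (5): when no overdemanded set exists, the maximal excess-demand set does not exist and $\delta(\emptyset)=0$ is the maximum, so the statement should be read with "none" matching $S=\emptyset$. I expect (4)'s decomposition to be the step needing the most care.
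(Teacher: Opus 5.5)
Your proof is correct. That includes the caveat you raise for (5): if no overdemanded set exists, $\emptyset$ is the minimal maximizer of $\delta(\cdot\,;p)$ while no excess-demand set exists. So (5) tacitly assumes an overdemanded set exists, which is always the case when Step~2 is reached.

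\textbf{Parts (4) and (5).} The paper does not prove this proposition itself; it cites the literature. However, its Appendix proves the general analogues of (4) and (5) for $(g,p)$-minimal sets. Specialized through Propositions~\ref{prop:Lyap-diff} and~\ref{prop:excess-demand-cond}, with $\rho(Y)=L(p+\chi_Y)=L(p)-\delta(Y;p)$, those are exactly your arguments. Put $W=(X\cup Y)\setminus Z$. Then $W\cup X=A$, $W\cap X=X\setminus Z_1$ and $A\cap Y=Y\setminus Z_2$. Hence your two brackets are precisely the two submodularity steps in the proof of Proposition~\ref{prop:excess-Lnat-union}(1), with the paper's ``by symmetry'' replaced by your case split on whether $Z_1$ or $Z_2$ is nonempty. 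Your argument for (5) is Proposition~\ref{prop:excess-Lnat-Rem53}(2) together with Proposition~\ref{prop:excess-Lnat-union}(2).

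\textbf{Where the routes differ.} The real difference is the source of the lattice inequality. You obtain supermodularity of $|O(\cdot,p)|$ by direct counting, from $O(X\cap Y,p)=O(X,p)\cap O(Y,p)$ and $O(X\cup Y,p)\supseteq O(X,p)\cup O(Y,p)$. This makes the unit-demand proof elementary and self-contained. The paper instead gets submodularity of $\rho$ from \Lnat-convexity of the Lyapunov function. That costs the discrete-convexity machinery, but it is what lets the identical argument apply unchanged to the multi-demand deficiency \eqref{eqn:deficiency-multi-def}. There each $\mu_b(\cdot\,;p)$ is defined by a minimization over $D_b(p)$, and no one-line set identity gives its supermodularity.
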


Mo et al.~\cite{MTL88} (see also \cite{San94})
 showed that the minimal  equilibrium price vector $\pminmin$ can be obtained
by using the unique maximal excess-demand set in each iteration of the algorithm.

\begin{theorem}[\cite{MTL88,San94}]
\label{thm:max-excess}
The algorithm {\sc Ascending Auction} finds 
the minimal  equilibrium price vector  $\pminmin$
if, in each iteration, the item set $X$ selected in Step~2
is  the maximal excess-demand set at price $p$.
\end{theorem}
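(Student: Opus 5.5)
The plan is to prove the standard invariant: if the current price satisfies $p \le \pminmin$ and $p$ is not an equilibrium, then an overdemanded set exists, and with $X$ the maximal excess-demand set we still have $p + \chi_X \le \pminmin$. Combined with termination, this gives correctness. When the algorithm stops there is no overdemanded set at $p$. By Hall's theorem applied to the bipartite demand graph, every bidder can then be matched to a demanded item, with unmatched bidders taking the dummy item $0$.

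We also need the items that stay unassigned to have price $0$. For this I would show separately, using $p \le \pminmin$, that any item with positive price is demanded in a way that forces it to be assigned. If it is cleaner, I would instead argue that a final $p \le \pminmin$ with no overdemanded set must equal $\pminmin$, via the Lyapunov characterization that comes later. At the level of this review, the simplest route is to cite the known equivalence used by \cite{MTL88,San94}: a price $p \le \pminmin$ with no overdemanded set is exactly $\pminmin$. Termination holds as already noted, since prices rise integrally and are bounded by $\pminmin$.

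The key step is the invariant $p+\chi_X \le \pminmin$. Suppose it fails. Then the set $Z = \{ i \in X : p(i) = \pminmin(i)\}$ is nonempty. Every bidder $b \in O(X,p)$ demands only items in $X$ at $p$.

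Fix a bidder $b \in U(Z,p)\cap O(X,p)$, who demands some item $i \in Z$ at $p$. Moving from $p$ to $\pminmin$, the price of $i$ stays the same and no other price decreases. Hence $i$ remains in $b$'s demand set at $\pminmin$.

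It remains to show that $b$'s demand at $\pminmin$ lies inside $Z$. Take any $j \notin X$. At $p$, $b$ strictly prefers $i$ to $j$, and since $p(j)\le \pminmin(j)$ while $p(i)=\pminmin(i)$, $b$ still strictly prefers $i$ to $j$ at $\pminmin$. Take any $j\in X\setminus Z$. Then $\pminmin(j) \ge p(j)+1$, and integrality of valuations gives $v_b(i)-p(i) \ge v_b(j)-p(j) > v_b(j)-\pminmin(j)$, so again $i$ is strictly preferred. Therefore $\widetilde D_b(\pminmin) \subseteq Z$ for all $b\in U(Z,p)\cap O(X,p)$.

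The excess-demand condition \eqref{eqn:def-excess-demand} gives $|U(Z,p)\cap O(X,p)| > |Z|$. So at $\pminmin$ more than $|Z|$ bidders demand only items of $Z$, which means $Z$ is overdemanded at $\pminmin$. This contradicts $\pminmin$ being an equilibrium, since an equilibrium admits an allocation giving each bidder a demanded item.

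Notice that this argument works for any excess-demand set $X$, not only the maximal one; maximality is used only to ensure that such a set exists whenever an overdemanded set does. Existence follows from Proposition~\ref{prop:excess-demand-overdemanded}(2): a minimal overdemanded set is excess-demand, and (4) then gives a unique maximal one.

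The main obstacle is the termination/optimality direction: showing that the final $p$ is exactly $\pminmin$, and not merely free of overdemanded sets. I would handle it as follows. Since $p\le\pminmin$, if $p\neq\pminmin$ then $p$ itself would be a smaller equilibrium, contradicting minimality, provided $p$ is an equilibrium. So I need to check that at $p$ every positively priced item can be assigned. I would try a Hall-type argument on the set of positively priced items not covered by a maximum matching, in the spirit of \cite{DGS86}, using $p\le\pminmin$ to exclude under-demand.
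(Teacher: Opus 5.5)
Your invariant step is correct. It is a clean combinatorial counterpart of Lemma~\ref{lem:donotexceed}: your $Z=\{i\in X : p(i)=\pminmin(i)\}$ is the set $\arg\max_i\{q(i)-\pminmin(i)\}$ used there, and your inclusion $U(Z,p)\cap O(X,p)\subseteq O(Z,\pminmin)$ plays the role of the \Lnat-inequality \eqref{eqn:Lnat-exc-2}. As you note, it works for every excess-demand set.

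The gap is the stopping step, which you flag but never close. Hall's theorem plus the absence of overdemanded sets only assigns each bidder a demanded item. An equilibrium also requires every item with $p(i)>0$ to be sold, i.e.\ $|U(S,p)|\ge|S|$ for every set $S$ of positively priced items. You then also need one matching serving both sides, e.g.\ via the Mendelsohn--Dulmage theorem. This is where $p\le\pminmin$ must be used, and only \emph{together} with the no-overdemand condition. For three bidders valuing each of two items at $5$ we have $\pminmin=(5,5)$, yet at $p=(5,3)\le\pminmin$ nobody demands item~$1$ (there $\{2\}$ is overdemanded).

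None of your three options supplies the argument. Citing \cite{MTL88,San94} for ``$p\le\pminmin$ with no overdemanded set implies $p=\pminmin$'' cites the core of the theorem itself. The Lyapunov and Hall-type routes are only announced. The same unproved claim also sits in your invariant's premise, ``$p$ not an equilibrium $\Rightarrow$ an overdemanded set exists''.

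The paper's machinery closes this step with Lemma~\ref{lem:localopt}. By Proposition~\ref{prop:overdemand-Lyap}, the stopping condition means $L(p+\chi_X)\ge L(p)$ for all $X$. The \Lnat-inequality \eqref{eqn:def-Lnat} with $\lambda=1$, applied to $p-\chi_S$ and $\pminmin$, then yields $L(p-\chi_S)\ge L(p)$. Hence $p$ minimizes $L$ and is an equilibrium price by the characterization opening Section~\ref{sec:Lyap-unit}. Since $L(p-\chi_S)-L(p)=|U(S,p)|-|S|$ for positively priced $S$, this is exactly your missing statement. In the paper, Theorem~\ref{thm:max-excess} itself is only quoted; its content is recovered from Theorem~\ref{thm:excess-Lnat}(ii) via Proposition~\ref{prop:uniq-max-excess-minimizer}.

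If you want to stay combinatorial, use the set $Z'=\{i\in N\setminus S : p(i)<\pminmin(i)\}$. This is what the \Lnat-inequality produces, since $(p-\chi_S+\1)\wedge\pminmin=p+\chi_{Z'}$. A price comparison like yours shows $U(S\cup Z',\pminmin)\subseteq U(S,p)\cup O(Z',p)$, whence
\[
|S|+|Z'|\le|U(S\cup Z',\pminmin)|\le|U(S,p)|+|O(Z',p)|\le|U(S,p)|+|Z'|.
\]
The first inequality holds because all items of $S\cup Z'$ are positively priced, hence sold, at $\pminmin$. The last holds by the absence of overdemanded sets at $p$.
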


It follows from Theorems \ref{thm:min-overdemand-unit} and \ref{thm:max-excess}
with Proposition~\ref{prop:excess-demand-overdemanded}
that the minimal equilibrium price vector can be obtained
by the algorithm {\sc Ascending Auction} 
if the set $X$ selected in each iteration
is either a minimal excess-demand set or the unique maximal excess-demand set.
 Andersson et al.~\cite{AAT13} showed that 
the minimality/maximality condition is not needed here.

\begin{theorem}[\cite{AAT13}]
\label{thm:excess}
The algorithm {\sc Ascending Auction} finds 
the minimal equilibrium price vector  $\pminmin$
if, in each iteration, the item set $X$ selected in Step~2
is an excess-demand set at price $p$.
\end{theorem}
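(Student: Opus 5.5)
We show that the invariant $p \le \pminmin$, which holds at Step~0, is preserved whenever $X$ is an excess-demand set. We then show that termination happens exactly at an equilibrium price.

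\emph{Invariant.} Suppose $p \le \pminmin$ and $X$ is excess-demand at $p$, and suppose for contradiction that $p+\chi_X \not\le \pminmin$. Since prices are integral, $Z=\{i\in X \mid p(i)=\pminmin(i)\}$ is nonempty. Take a bidder $b\in O(X,p)\setminus O(X\setminus Z,p)$, that is, $b\in U(Z,p)\cap O(X,p)$ by \eqref{eqn:def-excess-demand3a}. All of $b$'s demanded items at $p$ lie in $X$, and at least one lies in $Z$. Passing from $p$ to $\pminmin$, prices of items in $Z$ stay fixed while all other prices weakly increase. Hence every item of $Z\cap\widetilde{D}_b(p)$ remains demanded, and any newly demanded item must lie in $Z$: an item $j\notin \widetilde D_b(p)$ is strictly worse at $p$ and can only get worse relative to $Z$. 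Therefore $\emptyset\neq\widetilde{D}_b(\pminmin)\subseteq Z$, so $U(Z,p)\cap O(X,p)\subseteq O(Z,\pminmin)$. By \eqref{eqn:def-excess-demand} this gives $|O(Z,\pminmin)|>|Z|$, so $Z$ is overdemanded at $\pminmin$. But at an equilibrium $(\pminmin,\pi)$, every bidder in $O(Z,\pminmin)$ must receive a distinct item of $Z$, so no set is overdemanded there. This is a contradiction, and thus $p+\chi_X\le\pminmin$.

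\emph{Termination and correctness.} Each iteration strictly increases $p$ and keeps it bounded by $\pminmin$, so the algorithm stops after finitely many steps at some $p\le\pminmin$ with no overdemanded set. It remains to show that such a $p$ is an equilibrium price; minimality of $\pminmin$ then forces $p=\pminmin$. This is the main obstacle, since ``no overdemanded set'' does not by itself imply equilibrium. Here we exploit $p\le\pminmin$ via a Hall-type argument. First, since no set $Y$ satisfies $|O(Y,p)|>|Y|$, Hall's theorem (applied to the bidders with $\widetilde D_b(p)\cap N\ne\emptyset$ together with their demand sets, bidders demanding $0$ being assigned $0$) yields an allocation $\pi$ with $\pi(b)\in\widetilde D_b(p)$ for all $b$.

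Second, we must handle unsold items with positive price. Let $S=\{i\mid p(i)>0\}$. We need a matching covering $S$, possibly chosen among several. Apply a deficiency version of Hall's theorem, or equivalently the Lyapunov/LP duality, to an equilibrium $(\pminmin,\pi^*)$. Items $i$ with $p(i)>0$ have $\pminmin(i)>0$ and so are sold under $\pi^*$. Then argue as follows: if a set $T\subseteq S$ cannot be covered, consider the bidders demanding items in $T$ at $p$. Raising the price of $T$ slightly is consistent with lowering toward $p$, and this contradicts either the minimality of $\pminmin$ or the absence of an overdemanded set. I would try to make this precise by showing that if no demand-respecting matching at $p$ covers $S$, then the Kőnig--Egerváry dual yields a set $Y\subseteq S$ whose price can be decreased in $\pminmin$ while preserving equilibrium. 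Concretely, one checks that $\pminmin-\chi_{Y'}$ remains an equilibrium price for a suitable $Y'$ derived from $Y$, contradicting minimality.
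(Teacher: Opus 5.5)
Your invariant argument is correct. It is an elementary unfolding of Lemma~\ref{lem:donotexceed}: you use the same set $Z=\{i\in X \mid p(i)=\pminmin(i)\}$, but instead of the \Lnat-inequality $L(q)+L(\pminmin)\ge L(q-\chi_Z)+L(\pminmin+\chi_Z)$ you exhibit $Z$ directly as overdemanded at $\pminmin$.

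The genuine gap is the second half, which is the crux of the theorem. You never prove that a price $p\le\pminmin$ at which no set is overdemanded is an equilibrium price. What must be shown is the ``no underdemanded set'' condition
\[
|U(T,p)|\ge|T| \qquad (\forall\, T\subseteq S=\{i\in N \mid p(i)>0\}),
\]
equivalently $L(p-\chi_T)\ge L(p)$, since $L(p-\chi_T)-L(p)=|U(T,p)|-|T|$ for $L$ in \eqref{eqn:Lyap-unit}. In the paper's framework this is Lemma~\ref{lem:localopt}. The \Lnat-local optimality criterion needs both directions $\pm\chi_X$, and the downward one comes from \eqref{eqn:def-Lnat} with $\lambda=1$ applied to $p-\chi_T$ and $\pminmin$; Ausubel's characterization then turns ``minimizer of $L$'' into ``equilibrium''. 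Your sketch (a K\H{o}nig--Egerv\'ary dual, and ``$\pminmin-\chi_{Y'}$ remains an equilibrium price, contradicting minimality'') does not supply this step. It also aims at the wrong contradiction. If $T$ were underdemanded at $p$, that inequality gives $L(\pminmin-\chi_{Y'})<L(\pminmin)$ for $Y'=\{i \mid p(i)<\pminmin(i)\}\cup\{i\in T \mid p(i)=\pminmin(i)\}$. So $Y'$ would be underdemanded at $\pminmin$, contradicting that $\pminmin$ is an equilibrium price at all, not its minimality.

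You can close the gap in your own combinatorial style.
\begin{itemize}
\item Fix $T\subseteq S$, put $W=\{i\in N\setminus T \mid p(i)<\pminmin(i)\}$, and let $\pi^*$ be an equilibrium allocation at $\pminmin$. Every item of $T\cup W$ has positive price at $\pminmin$, so it is sold; hence $T\cup W$ goes to $|T|+|W|$ distinct bidders.
\item Take $b$ with $\pi^*(b)=i\in T\cup W$ and $b\notin U(T,p)$, and suppose some $j\in\widetilde D_b(p)$ lies outside $W$. Since $j\notin T$, we have $p(j)=\pminmin(j)$ (also when $j=0$).
\item Then $v_b(i)-\pminmin(i)\ge v_b(j)-p(j)\ge v_b(i)-p(i)$ forces $p(i)=\pminmin(i)$. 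Hence $i\in T$, and equality throughout gives $i\in\widetilde D_b(p)$, contradicting $b\notin U(T,p)$. Thus $\widetilde D_b(p)\subseteq W$, i.e., $b\in O(W,p)$.
\item Since $U(T,p)\cap O(W,p)=\emptyset$, you get $|U(T,p)|+|O(W,p)|\ge|T|+|W|$. Because $|O(W,p)|\le|W|$, this yields $|U(T,p)|\ge|T|$.
\end{itemize}
Hall's theorem then gives a demand-respecting matching covering $S$. You must still merge it with your matching covering $\{b \mid 0\notin\widetilde D_b(p)\}$ into a single allocation, e.g.\ by the Mendelsohn--Dulmage theorem. Only then is $p$ an equilibrium price, and $p=\pminmin$ follows from $p\le\pminmin$.
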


Andersson et al.~\cite[Example~1]{AAT13} argue that 
the choice of an excess-demand set in Step~2
is essential for finding $\pminmin$.
The behavior of this algorithm is investigated in detail  
for a concrete instance in \cite[Example~2]{AAT13}.

The following example illustrates 
the properties of 
overdemanded sets and excess-demand sets
stated in Proposition~\ref{prop:excess-demand-overdemanded}.

\begin{example} \rm 
\label{ex: excess-demand}
Consider a unit-demand auction with three items $N = \{ 1,2,3 \}$ 
and six bidders $M = \{ a, b, c, d,e,g \}$, and assume that the valuations are given by
\begin{align*}
 v_{a}(i) = v_{b}(i) &=
\begin{cases}
  1 & (i=1),\\
  0 & (i=2,3),\\
\end{cases}
\\
 v_{c}(i) = v_{d}(i) = v_{e}(i) &=
\begin{cases}
  0 & (i=1),\\
  1 & (i=2,3),\\
\end{cases}
\\
 v_{g}(i)  &=
\begin{cases}
  1 & (i=1,2),\\
  0 & (i=3).\\
\end{cases}
\end{align*}
Let $p = \0$, for which
\[
 \widetilde{D}_{a}(p)=\widetilde{D}_{b}(p)
= \{1\},
\quad
 \widetilde{D}_{c}(p)=\widetilde{D}_{d}(p)=\widetilde{D}_{e}(p)
= \{2,3\},
\quad
 \widetilde{D}_{g}(p)= \{1,2\}.
\]
The sets $O(Y,p)$ and $U(Y,p)$, 
defined in \eqref{eqn:def-OYp} and \eqref{eqn:def-UYp} respectively,
are given as
\begin{align*}
 O(Y,p) &=
\begin{cases}
  \emptyset & (Y = \emptyset, \{2\}, \{3\}),\\
  \{a,b\} & (Y = \{1\},  \{1,3\}),\\
  \{a,b,g \} & (Y = \{1,2\}),\\
  \{c,d,e\} & (Y =  \{2,3\}),\\
 \{a,b,c,d,e,g \} & (Y =  \{1,2,3 \}),
\end{cases}
\\
 U(Y,p) &=
\begin{cases}
  \emptyset & (Y = \emptyset),\\
 \{a,b,g \} & (Y = \{1\}),\\
  \{c,d,e,g\} & (Y = \{2\}, \{2,3\}),\\
  \{c,d,e\} & (Y = \{3\}),\\
 \{a,b,c,d,e,g \} & (Y = \{1,2\}, \{1,3\}, \{1,2,3 \}).
\end{cases}
\end{align*}
By straightforward calculation, we see that 
the family of overdemanded sets is given by 
$\{  \{ 1 \},   \{ 1,2 \}, \allowbreak   \{ 2, 3 \}, \{ 1, 2, 3 \}   \}$
and that of excess-demand sets by
$\{  \{ 1 \},   \{ 2, 3 \}, \{ 1, 2, 3 \}   \}$.
The former is a proper superset of the latter 
(Proposition~\ref{prop:excess-demand-overdemanded} (1))
and the minimal elements of the former,
$\{ 1 \}$ and $\{ 2, 3 \}$,
are members of the latter
(Proposition~\ref{prop:excess-demand-overdemanded} (2)).
There are two minimal excess-demand sets,
$\{ 1 \}$ and $\{ 2, 3 \}$,
and a unique maximal excess-demand set
$\{ 1, 2, 3 \}$.
\qed
\end{example}

\section{Ascending auction algorithms for multi-demand model}
\label{sec:auction-multi}


This section conveys the main message of this paper that
we can make the algorithm {\sc Ascending Auction} 
applicable to the multi-demand model
by appropriately defining ``overdemanded set'' and ``excess-demand set'' 
in the multi-demand setting.

\subsection{Multi-demand auction model}
\label{sec:model-multi}

 In contrast to the unit-demand model where each bidder is interested in obtaining a single item,  
the multi-demand model allows bidders to obtain 
multiple items with several units of each item.
In this setting, $N= \{1, 2, \ldots, n\}$ denotes the set of types of items,
rather than the set of individual items,
and the number of supplied units of item $i \in N$ is denoted by 
a positive integer $u(i)$. 
We denote 
\begin{align*}
& u  =(u(1), u(2), \ldots, u(n)),
\\
& [\0, u]_\Z  = \{x \in \Z^n \mid \ 0 \leq x(i) \leq u(i)\ (i\in N)\}.  
\end{align*}
 Each vector $x \in [\0, u]_\Z$ is called a \textit{bundle};
a bundle $x=(x(1), x(2), \ldots, x(n))$ 
corresponds to a multi-set 
of items, and $x(i) \in \Z_+$ 
represents the multiplicity of item $i \in N$.

 Each bidder $b \in M$ has a valuation function
$v_b: [\0, u]_\Z \to \Z$;
the number $v_b(x)$ represents the
value of the bundle $x$ worth to bidder $b$.
We assume that  each $v_b$ is monotone nondecreasing with 
$v_b(\0)=0$.
 For $b \in M$ and $p \in \Z^n_+$, 
define  the \textit{demand set} as
\begin{align*}
D_b(p) & =  \arg\max\{v_b(x) - p^\top x \mid x \in [\0, u]_\Z\}.
\end{align*}

 An \textit{allocation} of items is defined as a set of bundles
$x_1, x_2, \ldots, x_m \in [\0, u]_\Z$
satisfying $\sum_{b=1}^m x_b = u$.
 The auctioneer wants to find 
a pair of a price vector $p^*$ and an allocation
$x_1^*, x_2^*, \ldots, x_m^*$ such that 
$x_b^* \in D_b(p^*)$ for all $b \in M$.
Such a pair is called a \textit{(Walrasian) equilibrium}
and $p^*$ is a \textit{(Walrasian) equilibrium price vector}.

\begin{remark}\rm
Two different definitions of Walrasian equilibrium can be found in the literature. 
 The definition in this paper follows the one in \cite{Aus06}.
An alternative definition, in which unsold items may exist, is
as follows (cf.~\cite[Section 3]{GS99}):
a pair of a price vector $p^*$ and a set of bundles
$\tilde x_1, \ldots, \tilde x_m^*$ is called a {\it (Walrasian) equilibrium}
if it satisfies the following conditions:
\begin{equation}
\label{eqn:We2}
 \left.
\begin{array}{ll}
 \sum_{b=1}^m \tilde x_b \leq u,\\
 \tilde x_b \in D_b(p^*) \mbox{ for all }b \in M,\\
 p^*(i)=0 \mbox{ for all }i \in N \mbox{ with }\sum_{b=1}^m \tilde x_b(i) < u(i).
\end{array} 
\right\}
\end{equation}
 The two definitions of equilibrium are essentially 
equivalent in the sense that 
the set of equilibrium price vectors remains the same
under the two definitions,
provided that valuation functions 
$v_b$ $(b \in M)$
 are nondecreasing
(see, e.g., \cite[Section 3]{GS99}).
 Indeed, 
it is easy to see that if
a pair of a price vector $p^*$ and an allocation
$x_1^*, x_2^*, \ldots, x_m^*$ is an equilibrium in our sense,
then it satisfies the conditions in (\ref{eqn:We2}).
 Conversely, 
if $p^*$ and $\tilde x_1, \ldots, \tilde x_m^*$ satisfy 
(\ref{eqn:We2}),
the allocation $x_1^*, x_2^*, \ldots, x_m^*$ defined by
\[
 x_b^* = \tilde x_b\ (b=1,2,\ldots, m-1), \quad x_m^* = u - \sum_{b=1}^{m-1} \tilde x_b
\]
satisfies $x_b^* \in D_b(p^*)$  for $b = 1,2,\ldots,  m$; 
note that $x_m^* \in D_m(p^*)$ follows from 
(\ref{eqn:We2}) and
the assumed monotonicity of $v_m$.  
 Hence, $p^*$ is an equilibrium price in our sense.
Our unit-demand model described in Section~\ref{sec:unit-equil}
is of the form of \eqref{eqn:We2} with $u = \1$.
\qed
\end{remark}

\par
 While an equilibrium always exists in the unit-demand model,
it may not in the multi-demand model.
For the existence of an equilibrium in the multi-demand model,
a property of valuation functions,
called \textit{strong gross substitutes condition}, 
plays a critical role. 
To be specific,  it is known 
(Milgrom and Strulovici \cite[Theorem~19]{MiSt09}; also Ausubel \cite{Aus06})
that if bidders' valuation functions satisfy the strong gross substitutes condition,
an equilibrium exists 
and moreover, there exists a unique minimal equilibrium price.
The condition  of strong gross substitutes
was introduced by Milgrom and Strulovici \cite{MiSt09} 
as a generalization of the gross substitutes condition by Kelso and Crawford \cite{KC82} 
(also Gul and Stacchetti \cite{GS99})
for valuation functions defined on $\{0,1\}^n$.

The strong gross substitutes condition is known (\cite[Section~4]{ST15}) to be equivalent to 
\Mnat-concavity,
which is a fundamental concept in discrete convex analysis.
A valuation function $v: [\0, u]_\Z \to \Z$
 is said to be
\textit{\Mnat-concave} \cite{Murota03book,Murota24} if it satisfies the following 
exchange axiom:
\begin{align*}
&  \mbox{\bf (\Mnat-EXC)}
\ \forall x, y \in 
[\0, u]_\Z,
\ \forall i \in \suppp(x-y),\ 
\exists k \in \suppm(x-y)\cup\{0\}: \\
& \hspace*{20mm}
 v(x) + v(y) 
 \leq v(x - \chi_i + \chi_k) + v(y + \chi_i - \chi_k),
\end{align*}
where for 
any vector $z \in \Z^n$, we define
\[
 \suppp(z) = \{i \in N \mid z(i)>0\}, \qquad
\suppm(z) = \{i \in N \mid z(i)<0\},
\]
and for any $i \in N$, the notation 
$\chi_{i}$ means the $i$th unit vector,
that is, $\chi_{i} = \chi_{\{ i \} }$,
and $\chi_0 = {\bm 0}$ by convention.
Given the equivalence of \Mnat-concavity to strong gross substitutability,
we can say that an equilibrium exists
if the valuation functions $v_b$ $(b \in M$) 
are \Mnat-concave.
In this paper we always assume that valuation functions
$v_b$ $(b \in M)$ are \Mnat-concave.
It is noted that the valuation function 
underlying the unit-demand model is \Mnat-concave
(see \cite[Note~11.9]{Murota03book}, \cite[Example~2.3]{ST15}).

 \Mnat-concave/convex functions constitute a major class of 
discrete concave/convex functions in the theory of discrete convex analysis,
and they find various applications in
combinatorial optimization, operations research, algebra, etc., in addition to
mathematical economics and auction theory;
see \cite{Murota03book,Murota16,Murota24}.
In general, \Mnat-concave functions can be real-valued functions 
defined on non-rectangular domains,
while in this paper it suffices to consider
integer-valued functions defined on rectangular domains.

\subsection{Ascending auction algorithm}
\label{sec:auction-alg--multi}

In this section, we define the notions of ``overdemanded set'' and ``excess-demand set'' 
for the multi-demand model.
This enables us to extend the algorithm 
 {\sc Ascending Auction} of Section~\ref{sec:review-auction}
to the multi-demand setting.

 We first define an overdemanded set in the multi-demand model.
For $p \in \Z^n_+$ and $X\subseteq N$, define a notation
\begin{equation}
\label{eqn:def-mubXp}
 \mu_b(X;p) = \min\{y(X) \mid y \in D_b(p)\}
\qquad (b \in M),
\end{equation}
where $y(X) = \sum_{i \in X} y(i)$.
 The value $\mu_b(X;p)$ denotes the minimum quantity of  item units 
demanded by bidder $b$ from item set $X$ at price $p$.
Their sum $\sum_{b \in M} \mu_b(X;p)$
is the minimum total quantity of item units demanded from item set $X$,
while $u(X)=\sum_{i \in X}u(i)$ 
is the maximum quantity of item units that can be allocated to item set $X$.
 Hence, 
it would be natural to say that a set $X \subseteq N$ is 
\textit{overdemanded} at price $p$ if 
\begin{equation}
\label{eqn:overdemand-multi-def}
\sum_{b \in M} \mu_b(X;p) > u(X).
\end{equation}
We refer to 
\begin{equation}
\label{eqn:deficiency-multi-def}
\delta(X;p)=
\sum_{b \in M} \mu_b(X;p) - u(X)
\end{equation}
as the \textit{deficiency} of $X$.
It can be shown (see Proposition~\ref{prop:mLyap-diff-cor}(2))
that there exists a unique minimal member among the maximizers of 
the deficiency $\delta(X;p)$.
In the unit-demand model,
$\sum_{b \in M} \mu_b(X;p)$ equals $|O(X,p)|$ defined in \eqref{eqn:def-OYp}
and $u(X) = |X|$,
and therefore,
\eqref{eqn:overdemand-multi-def} coincides with the inequality 
$|O(X, p)| > |X|$ in \eqref{eqn:overdemand-unit-def}.

The algorithm {\sc Ascending Auction} in Section~\ref{sec:review-auction}
is described in terms of overdemanded sets.
Although it was originally meant 
for the unit-demand model,
the algorithm makes sense,
at least formally, 
for the multi-demand model
with the above-definition of overdemanded sets
in the multi-demand model.
The following theorem, 
which is the first main result of this paper,
states that the algorithm thus obtained is indeed valid 
for the multi-demand model.

\begin{theorem}
\label{thm:min-overdemand-multi}
 The algorithm {\sc Ascending Auction} 
applied to the multi-demand model 
finds the unique minimal equilibrium price vector  $\pminmin$
if, in each iteration, 
{\rm (i)}
the item set $X$ selected in Step~2 is 
a minimal overdemanded set \eqref{eqn:overdemand-multi-def} at price~$p$, or
{\rm (ii)}
 $X$ is the unique minimal maximizer of 
the deficiency $\delta(X;p)$ in \eqref{eqn:deficiency-multi-def}.
\end{theorem}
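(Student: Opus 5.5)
We will prove the theorem by passing to the Lyapunov function, following the route of Table~\ref{table:secorg}. Let
\[
 L(p) = \sum_{b \in M} \max\{v_b(x) - p^\top x \mid x \in [\0,u]_\Z\} + p^\top u .
\]
We rely on three known facts. First, by Ausubel's characterization, the equilibrium price vectors are exactly the minimizers of $L$, so $\pminmin$ is the unique minimal minimizer of $L$. Second, by \cite{MSY16}, $L$ is \Lnat-convex when each $v_b$ is \Mnat-concave. Third, the Lyapunov function has a simple forward difference formula along characteristic vectors:
\[
 L(p+\chi_X) - L(p) = u(X) - \sum_{b} \mu_b(X;p) = -\delta(X;p) .
\]
We will prove the third fact directly. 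Its key step is that, for \Mnat-concave $v_b$, the value $\max_x\{v_b(x)-(p+\chi_X)^\top x\}$ equals $\max_x\{v_b(x)-p^\top x\} - \mu_b(X;p)$. The inequality $\ge$ is immediate from the definition of $\mu_b$. The inequality $\le$ uses that the indirect utility is \Lnat-convex in $p$, so its unit-step differences are attained by demand bundles at $p$; this is the standard conjugacy argument. With this formula, $X$ is overdemanded in the sense of \eqref{eqn:overdemand-multi-def} exactly when $L(p+\chi_X)<L(p)$, and the maximizers of $\delta$ are exactly the steepest-descent directions of $L$ among $\{\chi_X\}$.

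Next we reduce the theorem to a statement about a general \Lnat-convex $g$: start at $p^\circ \le p^*$, where $p^*$ is the minimal minimizer, and repeatedly move to $p+\chi_X$. In case (ii), $X$ is the minimal maximizer of $X\mapsto g(p)-g(p+\chi_X)$. In case (i), $X$ is an inclusion-minimal set with $g(p+\chi_X)<g(p)$. Termination in Step~1 means that no $X$ has $g(p+\chi_X)<g(p)$. Combined with $p\le p^*$ and \Lnat-convexity, this gives global optimality, because the local optimality criterion for \Lnat-convex functions only needs $\pm\chi_X$ directions, and the $-\chi_X$ directions can be handled through the monotonicity invariant below. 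Since $g$ is integer-valued and strictly decreases at each step, the algorithm stops after finitely many iterations.

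The core of the argument is the invariant $p \le p^*$, and we expect this to be the main obstacle. Suppose $p\le p^*$ and $X$ is chosen as in (i) or (ii), and suppose for contradiction that $p+\chi_X \not\le p^*$. Put $Y=\{i\in X \mid p(i)=p^*(i)\}\neq\emptyset$. Apply discrete midpoint convexity or submodularity of \Lnat-convex functions to the pair $p+\chi_X$ and $p^*$:
\[
 g(p+\chi_X)+g(p^*) \ge g\bigl((p+\chi_X)\wedge p^*\bigr) + g\bigl((p+\chi_X)\vee p^*\bigr).
\]
Here $(p+\chi_X)\wedge p^* = p+\chi_{X\setminus Y}$, and $(p+\chi_X)\vee p^* = p^*+\chi_Y$. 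Since $p^*$ is a minimizer, $g(p^*+\chi_Y)\ge g(p^*)$, and therefore
\[
 g(p+\chi_{X\setminus Y}) \le g(p+\chi_X).
\]
In case (ii) this makes $X\setminus Y$ at least as good a descent direction as $X$, which contradicts the minimality of $X$ among the maximizers. In case (i), the same inequality shows that $X\setminus Y$ is still strictly decreasing, so it is overdemanded. If $X\setminus Y\neq\emptyset$, this contradicts the minimality of $X$ among overdemanded sets. If $X\setminus Y=\emptyset$, it contradicts $g(p+\chi_X)<g(p)$.

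Finally, termination at some $p\le p^*$ with no descent direction $\chi_X$ forces $p=p^*$. We show this via the same submodularity argument applied to $p$ and $p^*$: we may write $p^*=p+\sum_k \chi_{X_k}$ with nested sets $X_1\supseteq X_2\supseteq\cdots$, and \Lnat-convexity along this chain shows that if $g(p^*)<g(p)$, then $g(p+\chi_{X_1})<g(p)$. Hence $p$ is a minimizer with $p\le p^*$, and by minimality of $p^*$ we get $p=p^*$. Translating back through the difference formula for $L$ then yields both (i) and (ii) of the theorem.
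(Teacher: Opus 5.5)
Your overall route is the paper's: pass to the Lyapunov function, use its \Lnat-convexity and the difference formula $L(p+\chi_X)-L(p)=-\delta(X;p)$, and maintain the invariant $p\le p^*$. Your sketch of the difference formula is acceptable, since the paper also only cites it. Your invariant argument is correct. It is the paper's Lemma~\ref{lem:donotexceed} specialized to cases (i) and (ii): because $p+\chi_X-p^*\le\1$, the exchange inequality used there reduces to exactly your submodularity computation with $(p+\chi_X)\wedge p^*=p+\chi_{X\setminus Y}$ and $(p+\chi_X)\vee p^*=p^*+\chi_Y$.

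The gap is in the last step, showing that termination forces $p=p^*$. You assert that ``\Lnat-convexity along this chain'' gives: if $g(p^*)<g(p)$, then $g(p+\chi_{X_1})<g(p)$. An \Lnat-convex function need not be convex along the chain $p,\ p+\chi_{X_1},\ p+\chi_{X_1}+\chi_{X_2},\dots$. The implication is false if all you use is $g(p^*)<g(p)$. Take $g(x_1,x_2)=|x_1-x_2-1|+\max(0,-x_1)+\max(0,-x_2)$, $p=(0,0)$, and $q=(2,1)$, which is a minimizer. Then $X_1=\{1,2\}$ and $g(p)=g(p+\chi_{X_1})=1$, while $g(q)=0$. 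Plain submodularity on the comparable pair $p\le p^*$ gives nothing either.

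What makes your conclusion true is that $p^*$ is the \emph{least} minimizer, and you never invoke this at that point. The repair is to take $\lambda=1$ in \eqref{eqn:def-Lnat} for the pair $(p,p^*)$. Since $p\le p^*$, you get $(p+\1)\wedge p^*=p+\chi_{X_1}$ and $p\vee(p^*-\1)=p^*-\chi_{X_1}$, hence
\[
g(p)+g(p^*)\ \ge\ g(p+\chi_{X_1})+g(p^*-\chi_{X_1}).
\]
When $p\ne p^*$, the set $X_1$ is nonempty, so $p^*-\chi_{X_1}$ is not $\ge p^*$. It is therefore not a minimizer, so $g(p^*-\chi_{X_1})>g(p^*)$ and $g(p+\chi_{X_1})<g(p)$.

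This repaired argument is slightly more elementary than the paper's Lemma~\ref{lem:localopt}. The paper invokes the local optimality criterion for \Lnat-convex functions and rules out descent along $-\chi_X$ by applying the same $\lambda=1$ inequality to $\hat p-\chi_X$ and a minimizer above $\hat p$. In exchange, the paper's lemma works for any minimizer above $\hat p$, not only the least one.

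Also, your earlier remark that the $-\chi_X$ directions are ``handled through the monotonicity invariant'' is not by itself an argument. The invariant only supplies $p\le p^*$; the exchange inequality above is still what does the work.
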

\begin{proof}
The proof is given in 
Section~\ref{sec:Lyap-multi}.
In Sections \ref{sec:Lyap-unit} and \ref{sec:Lnat-minimize},
we introduce the necessary tools for the proof,
including the Lyapunov function
and its discrete convexity, called \Lnat-convexity.
\end{proof}

Next, we go on to define ``excess-demand set''
for the multi-demand model.
We say that a nonempty item set $X$ is an \textit{excess-demand set}
if the following condition is satisfied:
\begin{align}
\label{eqn:excess-demand-multi-def}
  \sum_{b \in M} (\mu_b(X;p) - \mu_b(X\setminus Z;p)) > u(Z) 
 \qquad (\emptyset \ne  \forall Z \subseteq X),
\end{align}
where \eqref{eqn:excess-demand-multi-def} is equivalent to
\begin{align}
\label{eqn:excess-demand-multi-def2}
  \delta(X;p) - \delta(X\setminus Z;p) > 0 
 \qquad (\emptyset \ne  \forall Z \subseteq X).
\end{align}

To justify this definition, suppose that each bidder $b$ wants
 to receive 
$x_b(X)$ units from item set $X$
and $x_b(X\setminus Z)$ units from item set $X\setminus Z$
simultaneously,
subject to the condition that $x_b \in D_b(p)$.
  Then, 
$\mu_b(X;p) - \mu_b(X\setminus Z;p)$
is the number of item units demanded by bidder $b$ from the item set $Z$.
Thus the inequality \eqref{eqn:excess-demand-multi-def} 
means that 
the sum of this demand over all bidders
exceeds 
the number of units $u(Z)$ of supply from $Z$, 
which is the situation where the demand exceeds the supply.
In the unit-demand model,
the condition \eqref{eqn:excess-demand-multi-def} 
reduces to the alternative expression  
\eqref{eqn:def-excess-demand2}
of the defining condition for excess-demand sets.

An excess-demand set is a special case of an overdemanded set,
since the inequality 
\eqref{eqn:excess-demand-multi-def} with $Z = X$
coincides with \eqref{eqn:overdemand-multi-def}.
Thus the algorithm {\sc Ascending Auction}
using an excess-demand set in Step~2
is applicable,
at least formally, 
to the multi-demand model.
The following theorem, 
which is the second main result of this paper,
states that the algorithm thus obtained is indeed valid 
for the multi-demand model.

\begin{theorem}
\label{thm:excess-multi}
The algorithm {\sc Ascending Auction} applied to the multi-demand model 
finds the unique minimal equilibrium price vector $\pminmin$
if, in each iteration, the item set $X$ selected in Step~2
is an excess-demand set
in the sense of \eqref{eqn:excess-demand-multi-def}.
\end{theorem}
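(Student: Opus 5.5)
We will prove the theorem by passing to the Lyapunov function, following the route of Table~\ref{table:secorg}. Let
\[
L(p)=\sum_{b\in M}\max\{v_b(x)-p^\top x \mid x\in[\0,u]_\Z\}+u^\top p .
\]
We use two known facts: the equilibrium prices are exactly the minimizers of $L$ (Ausubel), and $L$ is \Lnat-convex (Murota--Shioura--Yang). Hence $\pminmin$ is the unique minimal minimizer of $L$, and we only need to show that the steps $p:=p+\chi_X$ with $X$ an excess-demand set never overshoot the minimal minimizer.

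\textbf{Step 1: deficiency as a directional derivative.} Each term $\max_x\{v_b(x)-p^\top x\}$ is a maximum of functions linear in $p$, and $D_b(p)$ is the argmax at $p$. Because we move in an integer direction and everything is integral, we expect the exact identity
\[
L(p+\chi_X)-L(p)=u(X)-\sum_b\mu_b(X;p)=-\delta(X;p).
\]
To justify it, one would check that the maximizers at $p+\chi_X$ include a maximizer at $p$ that minimizes $y(X)$. This should follow from \Mnat-concavity, or equivalently from the \Lnat-convexity of the conjugate in the direction $\chi_X$. With the identity in hand, the three notions translate as follows:
\begin{itemize}
\item $X$ is overdemanded exactly when $L(p+\chi_X)<L(p)$.
\item $X$ is an excess-demand set exactly when $L(p+\chi_X)<L(p+\chi_{X\setminus Z})$ for every nonempty $Z\subseteq X$. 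In other words, within the interval $\{p+\chi_Y \mid Y\subseteq X\}$ the value at the top corner $p+\chi_X$ is strictly smaller than at every other corner $p+\chi_Y$ with $Y\subsetneq X$.
\end{itemize}

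\textbf{Step 2: the general \Lnat-convex statement.} Let $g$ be \Lnat-convex with a minimal minimizer $p^*$, and let $p\le p^*$. Suppose $X$ satisfies $g(p+\chi_X)<g(p+\chi_Y)$ for all $Y\subsetneq X$. We claim $p+\chi_X\le p^*$.

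Suppose not, and let $Z=\{i\in X\mid p(i)=p^*(i)\}$, which is then nonempty. Apply submodularity of \Lnat-convex functions to $q=p+\chi_X$ and $p^*$:
\[
g(q)+g(p^*)\ge g(q\wedge p^*)+g(q\vee p^*).
\]
Here $q\wedge p^*=p+\chi_{X\setminus Z}$. Since $p^*$ minimizes $g$, we have $g(q\vee p^*)\ge g(p^*)$, so $g(q)\ge g(p+\chi_{X\setminus Z})$. This contradicts the excess-demand inequality for this $Z$. Hence $p+\chi_X\le p^*$.

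This is the crux of the argument, and it is cleaner than the unit-demand proof in \cite{AAT13}. Note that it uses only submodularity on the lattice. Discrete midpoint convexity is needed only for the termination criterion in Step 3.

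\textbf{Step 3: termination.} The algorithm stops when no overdemanded set exists, i.e.\ when $L(p+\chi_X)\ge L(p)$ for all $X$. By the local optimality criterion for \Lnat-convex functions we also need $L(p-\chi_X)\ge L(p)$ for all $X$.

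For a downward step we argue as follows. Since $p\le\pminmin$ throughout, take the minimal minimizer and compare the two points. By the known fact that from a point $p\le p^*$ at which no upward step decreases $L$ one has $p=p^*$, we get $p=\pminmin$. The proof of this fact: let $X=\{i\mid p(i)<p^*(i)\}$. By \Lnat-convexity (discrete midpoint convexity, or the steepest-descent distance bound),
\[
L(p+\chi_X)+L(p^*-\chi_X)\le L(p)+L(p^*).
\]
If $X\neq\emptyset$, minimality of $p^*$ gives $L(p^*-\chi_X)>L(p^*)$, so $L(p+\chi_X)<L(p)$, contradicting the stopping condition.

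Finiteness holds because $p$ increases by $\chi_X$ each iteration and stays bounded by $\pminmin$. The invariant $p\le\pminmin$ holds initially by the choice of $p^\circ$ and is preserved by Step 2.

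The main obstacle I expect is Step 1: the identity between $L(p+\chi_X)-L(p)$ and $-\delta(X;p)$ must be exact, not just an inequality, and it must hold with the min-over-demand-set definition of $\mu_b$. This will require a careful \Mnat-concavity argument, namely that raising the prices on $X$ by one unit does not alter which bundles are optimal beyond selecting those with the smallest $y(X)$.
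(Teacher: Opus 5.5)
Your proposal is correct and is essentially the paper's proof. The identity you flag as the main obstacle is exactly Proposition~\ref{prop:mLyap-diff}, which the paper imports from \cite{MSY13} without proof; your reduction to $D_b(p)\cap D_b(p+\chi_X)\neq\emptyset$ is the right one, and it follows from (\Mnat-EXC) applied to a pair $x\in D_b(p+\chi_X)$, $y\in D_b(p)$ at minimum $\ell_1$-distance, using integrality of $v_b$. Your excess-demand translation, Step~2 and Step~3 are then Proposition~\ref{prop:excess-demand-cond-multi} and Lemmas~\ref{lem:donotexceed} and~\ref{lem:localopt}, with only cosmetic differences: the \Lnat-inequality used in Lemma~\ref{lem:donotexceed} is literally your submodular inequality here (since $q-\chi_Z=q\wedge p^*$ and $p^*+\chi_Z=q\vee p^*$), and in Step~3 you apply \eqref{eqn:def-Lnat} with $\lambda=1$ directly to $p$ and the minimal minimizer instead of passing through the local optimality criterion.
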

\begin{proof}
The proof is given in 
Section~\ref{sec:Lyap-multi}
after the necessary tools,
including the Lyapunov function
and its \Lnat-convexity,
are introduced in Sections \ref{sec:Lyap-unit} and \ref{sec:Lnat-minimize}.
\end{proof}

It should be clear that
Theorems \ref{thm:min-overdemand-multi} 
and \ref{thm:excess-multi} above are
the multi-demand extensions of
Theorems \ref{thm:min-overdemand-unit} and \ref{thm:excess}
for the unit-demand model, respectively.

\begin{remark} \rm
\label{rem:runningtimeMult}
The running time of the {\sc Ascending Auction} algorithm
for the multi-demand model 
using an excess-demand set $X$ in Step~2
is discussed here.
The number of iterations (updates) lies between 
$\| p^\circ - \pminmin \|_{\infty}$
and
$\| p^\circ - \pminmin \|_{1}$ $(\le n \cdot \| p^\circ - \pminmin \|_{\infty})$.
If, in each iteration,
$X$ is the maximal excess-demand set, 
the number of iterations equals the lower bound 
$\| p^\circ - \pminmin \|_{\infty}$;
see \cite{MSY16}.
In each iteration, we have to find an excess-demand set.
We can compute the maximal excess-demand set in polynomial time
via submodular function minimization \cite{MSY16}
or via the polymatroid sum problem \cite{ENPRKV25}.
We can also compute a minimal excess-demand set in polynomial time with 
$O(n\sp{2})$ 
applications of submodular function minimization.
It is not known whether we can compute faster
some other excess-demand set (without imposing maximality or minimality).
\qed
\end{remark}

\section{Excess-demand ascending auction viewed as Lyapunov function minimization}
\label{sec:Lyap-unit}


The following three sections
are devoted to the proofs of Theorems \ref{thm:min-overdemand-multi} 
and \ref{thm:excess-multi}.
In this section, 
we present the Lyapunov function approach for the unit-demand auction model.
By reformulating the notions of excess-demand set and its extremal cases
(minimal and maximal excess-demand sets)
in terms of the Lyapunov function,
we rewrite the algorithm {\sc Ascending Auction}
of Section~\ref{sec:review-auction} 
to a minimization algorithm for the Lyapunov function.

The \textit{Lyapunov function} 
for the unit-demand model
is a function in price vector $p \in \Z^n_+$
defined as
\begin{align}
L(p) &=
\sum_{b \in M} \max\big[0, \max_{i \in N}\{v_b(i) - p(i)  \}\big] + p(N)
\qquad (p \in \Z^n_+),
\label{eqn:Lyap-unit}
\end{align}
where $p(N) = \sum_{i \in N} p(i)$.
The set of  minimizers of 
this function
coincides with the set of  equilibrium price vectors,
which is a special case of a well-known result of Ausubel~\cite{Aus06}.

\begin{prop}
In the unit-demand model, 
a vector $p \in \Z^n_+$ is an  equilibrium price vector 
if and only if it is a  minimizer of the Lyapunov function $L$
in \eqref{eqn:Lyap-unit}.
In particular,
$p \in \Z^n_+$ is the minimal equilibrium price vector 
if and only if it is the minimal minimizer of $L$.
\end{prop}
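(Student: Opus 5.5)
Write $V_b(p) = \max[0, \max_{i\in N}\{v_b(i)-p(i)\}]$ for the indirect utility of bidder $b$, so that $L(p)=\sum_{b} V_b(p)+p(N)$. The plan is to prove the characterization by LP duality, then read off the statement about minimal elements. Formally, this is the special case of Ausubel's result. Since the unit-demand model is small, I would give a self-contained argument through the assignment LP.

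\textbf{Step 1 (lower bound).} For any allocation $\pi$ and any $p\in\Z^n_+$, we have $V_b(p)\ge v_b(\pi(b))-p(\pi(b))$ for every $b$. Summing over $b$ and using $p\ge \0$ on unassigned items gives
\[
L(p)\ \ge\ \sum_b v_b(\pi(b)) - \sum_{i\in\pi(M)\cap N} p(i) + p(N)\ \ge\ \sum_b v_b(\pi(b)).
\]
Both inequalities are tight exactly when $\pi(b)\in\widetilde D_b(p)$ for all $b$ and $p(i)=0$ for every item $i$ not assigned by $\pi$. This is precisely the equilibrium condition.

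\textbf{Step 2 (consequence of the bound).} If $(p^*,\pi)$ is an equilibrium, then $L(p^*)=\sum_b v_b(\pi(b))$, which is at most $W := \max_\pi \sum_b v_b(\pi(b))$. Combined with Step~1 applied to an optimal $\pi$, this gives $L(p^*) = W$. Hence $\min L = W$ and $p^*$ is a minimizer.

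\textbf{Step 3 (converse).} Let $p$ be any minimizer of $L$ over $\Z^n_+$. Take an equilibrium price $p^*$, which exists as stated in Section~\ref{sec:unit-equil}. Then $L(p)=L(p^*)=W$. Let $\pi$ be a welfare-maximizing allocation, so that $\sum_b v_b(\pi(b)) = W = L(p)$. Step~1 then forces both inequalities to be equalities, so $(p,\pi)$ is an equilibrium.

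The only delicate point is that Step~3 uses the existence of an equilibrium to know $\min L = W$. Without it, one would need LP duality for the bipartite assignment problem: total unimodularity gives an integral dual optimum $(p,s)$ with $s_b = V_b(p)$. Either route is standard, and I would cite existence to keep the argument short.

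\textbf{Step 4 (minimal elements).} Steps~2 and~3 show that the set of equilibrium price vectors coincides with $\arg\min L$. Therefore the unique minimal equilibrium price $\pminmin$ is the minimal element of $\arg\min L$, and that minimal element is itself unique. The ``in particular'' part of the statement follows immediately.
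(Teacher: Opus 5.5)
Your argument is correct, but it takes a different route from the paper. The paper gives no proof: it invokes the statement as the unit-demand special case of Ausubel's theorem, and the multi-demand version reappears, again by citation, as Proposition~\ref{prop:multi-Lyap-prop}(2).

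You instead prove it directly by weak duality. You show $L(p)\ge\sum_b v_b(\pi(b))$ for every allocation $\pi$, with equality exactly when $(p,\pi)$ is an equilibrium. You then close the converse using existence of an integral equilibrium. The paper itself takes that existence from the literature in Section~\ref{sec:unit-equil}, so nothing circular enters.

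Your route is more transparent and yields two facts the citation hides. First, $\min L$ equals the maximum welfare $W$. Second, any minimizer of $L$ together with any welfare-maximizing allocation forms an equilibrium.

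It also carries over verbatim to the multi-demand function \eqref{eqn:def-Lyap-multi}. There allocations satisfy $\sum_b x_b=u$, so the slack term $\sum_{i\in N\setminus\pi(M)}p(i)$ vanishes. The only model-specific input left is existence of an equilibrium, which is exactly where \Mnat-concavity is needed. The citation covers both models at once, but as a black box.

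Your side remark is also correct: existence can be replaced by total unimodularity of the assignment LP, whose dual objective after eliminating the $s_b$ is exactly $L$. That would make the unit-demand case fully self-contained.
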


Our next step is to show that
 the algorithm {\sc Ascending Auction} for finding an
equilibrium price vector can be regarded as
an incremental algorithm for finding  
a minimizer of the Lyapunov function,
where the  vector $p$ is repeatedly incremented with a 0-1 vector.
 In the following, we refer to 
a 0-1 vector $d$ with $L(p+ d) < L(p)$ as 
a \textit{descent direction} of the Lyapunov function $L$ at $p$.

The next proposition connects
the deficiency $\delta(X;p)$
in \eqref{eqn:deficiency-unit-def}
to the value $L(p+ \chi_X) - L(p)$.

\begin{prop}[{\cite[Lemma 5.8]{MSY16}}]
\label{prop:Lyap-diff}
 For $p \in \Z^n_+$ and $X\subseteq N$, it holds that 
 \begin{equation}
 \label{eqn:Lyap-diff-0} 
  L(p+ \chi_X) - L(p)  = |X| - |O(X,p)|
\ \  (= -\delta(X;p) ).
 \end{equation}
\end{prop}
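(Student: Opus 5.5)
The plan is to compute $L(p+\chi_X)-L(p)$ one bidder at a time and then add the linear term. Since $p(N)$ is linear, we have $(p+\chi_X)(N)-p(N)=|X|$. So it suffices to show that, for each bidder $b$, writing
\[
f_b(p)=\max\big[0,\max_{i\in N}\{v_b(i)-p(i)\}\big]=\max_{i\in N^0}\{v_b(i)-p(i)\}
\]
(using the dummy item $0$ with $v_b(0)=p(0)=0$), the per-bidder difference satisfies
\[
f_b(p+\chi_X)-f_b(p) =
\begin{cases} -1 & \text{if } b\in O(X,p),\\ 0 & \text{otherwise.}\end{cases}
\]
Summing over $b\in M$ then gives $|X|-|O(X,p)|$, which is \eqref{eqn:Lyap-diff-0}.

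For a fixed bidder, let $\alpha=f_b(p)$, the optimal surplus at $p$. Raising prices on $X$ by one lowers the surplus of each item in $X$ by exactly one and leaves items outside $X$ (including $0$, since $0\notin X$) unchanged. There are two cases.

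If $\widetilde{D}_b(p)\not\subseteq X$, some maximizer $j\notin X$ still attains $\alpha$. No item can exceed $\alpha$ after the increase, since surpluses only weakly decrease. Hence $f_b(p+\chi_X)=\alpha$ and the difference is $0$.

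If $\widetilde{D}_b(p)\subseteq X$, every maximizer drops to $\alpha-1$. Every non-maximizer $j\notin X$ had surplus at most $\alpha-1$, because values and prices are integers and so the gap to the optimum is at least $1$. Non-maximizers in $X$ drop further still. Hence $f_b(p+\chi_X)=\alpha-1$.

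The only delicate point is this integrality step. It is what ensures the decrease is exactly $1$ and not smaller: a second-best item outside $X$ cannot catch up by a fractional amount. This is where $v_b\in\Z_+$ and $p\in\Z^n_+$ are used. The rest is bookkeeping: note that $\widetilde{D}_b(p)$ is taken over $N^0$, so $b\in O(X,p)$ exactly in the second case. The dummy item $0\notin X$ is what makes the bidders with $0\in\widetilde{D}_b(p)$ fall into the zero case.
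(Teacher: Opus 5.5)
Your proof is correct and complete. The paper gives no proof of its own here. It imports the identity from \cite[Lemma~5.8]{MSY16}, and the identity is recovered later as the case $u=\1$ of Proposition~\ref{prop:mLyap-diff}, via the remark in Section~\ref{sec:auction-alg--multi} that $\sum_{b\in M}\mu_b(X;p)=|O(X,p)|$ in the unit-demand model.

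Your per-bidder identity, $f_b(p+\chi_X)-f_b(p)=-1$ if $\widetilde{D}_b(p)\subseteq X$ and $0$ otherwise, is the unit-demand instance of the indirect-utility formula $V_b(p+\chi_X)-V_b(p)=-\mu_b(X;p)$ behind that proposition. You obtain it directly from integrality, with no discrete convexity. That shortcut works because each alternative of a unit-demand bidder contains at most one unit from $X$. Every surplus therefore drops by $0$ or $1$, and an integer gap of at least $1$ cannot be closed.

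For genuinely multi-unit bundles this step fails without \Mnat-concavity. Take one item type, $u=2$, $v(0)=v(1)=0$, $v(2)=1$, and $V(q)=\max_{x\in\{0,1,2\}}\{v(x)-qx\}$. Then $D(0)=\{2\}$, so $\mu(\{1\};0)=2$, yet $V(1)-V(0)=0-1=-1\neq-2$. The reason is that the empty bundle overtakes the bundle $2$ already at price $1/2$.

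So your route gives a short, self-contained proof of the unit-demand statement. The general formula the paper relies on covers multi-unit demand, but only by assuming \Mnat-concave valuations.
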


The relation \eqref{eqn:Lyap-diff-0} enables us to reformulate 
the notions of overdemanded and excess-demand sets 
in terms of the Lyapunov function $L$.
It  follows immediately from \eqref{eqn:Lyap-diff-0} that
an overdemanded set $X$
corresponds to a descent direction $d= \chi_X$ of the Lyapunov function,
as follows.

\begin{prop}
\label{prop:overdemand-Lyap}
Let $p \in \Z^n_+$.
A set $X\subseteq N$  is an overdemanded set at price $p$
in the sense of \eqref{eqn:overdemand-unit-def}
if and only if $L(p+\chi_X) < L(p)$ holds.
In particular,  $X$  is  a minimal overdemanded set at price $p$
if and only if $X$ is a minimal subset of $N$ satisfying $L(p+\chi_X) < L(p)$.
\end{prop}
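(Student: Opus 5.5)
This follows directly from Proposition~\ref{prop:Lyap-diff}, so the plan is simply to translate the overdemandedness condition through the identity \eqref{eqn:Lyap-diff-0}. Fix $p \in \Z^n_+$ and $X \subseteq N$. By \eqref{eqn:overdemand-unit-def}, $X$ is overdemanded at $p$ exactly when $|O(X,p)| > |X|$. This is the same as $\delta(X;p) > 0$ with $\delta$ as in \eqref{eqn:deficiency-unit-def}. By \eqref{eqn:Lyap-diff-0} we have $L(p+\chi_X) - L(p) = -\delta(X;p)$, so the condition is in turn equivalent to $L(p+\chi_X) - L(p) < 0$, i.e., $L(p+\chi_X) < L(p)$. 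This proves the first assertion.

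For the second assertion, I use the first one to identify two families of subsets of $N$. Let $\mathcal{A}$ be the family of overdemanded sets at $p$, and let $\mathcal{B}$ be the family of sets $X \subseteq N$ with $L(p+\chi_X) < L(p)$. The first part shows $\mathcal{A} = \mathcal{B}$ as families of subsets of $N$. Minimality is defined purely by inclusion within the family, so the minimal members of $\mathcal{A}$ coincide with the minimal members of $\mathcal{B}$. That is exactly the claimed equivalence.

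I expect no real obstacle here, since all the content sits in Proposition~\ref{prop:Lyap-diff}, which is taken as given. The one point to check is the edge case $X=\emptyset$. It causes no inconsistency: $O(\emptyset,p)=\emptyset$ because every demand set $\widetilde{D}_b(p)$ is nonempty, so $\emptyset$ is not overdemanded. On the other side, $L(p+\chi_\emptyset)=L(p)$, so $\emptyset \notin \mathcal{B}$ either. This matches \eqref{eqn:Lyap-diff-0} evaluated at $X=\emptyset$.
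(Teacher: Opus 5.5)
Your proof is correct and matches the paper's argument: the paper likewise reads the statement off directly from the identity \eqref{eqn:Lyap-diff-0} of Proposition~\ref{prop:Lyap-diff}. The minimality claim then follows, as you say, because the two families of subsets of $N$ coincide.
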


 Based on this relationship, the (generic)
algorithm {\sc Ascending Auction}
can be rewritten in terms of the Lyapunov function as follows.

\medskip

\noindent
{\bf Algorithm} {\sc Lyapunov Minimization} (generic form)
\\
Step~0:  Set $p:=p^\circ$, where $p^\circ \in \Z^n_+$
is a vector satisfying
$p^\circ \le \pminmin$. 
\\
Step~1:
 If $L(p+\chi_X) \ge L(p)$ for all $X\subseteq N$, then stop.
\\ 
Step~2:
 Find a set $X\subseteq N$ such that $L(p+\chi_X) < L(p)$.
\\
Step~3:
 Set $p:=p + \chi_X$ and go to Step~1.

\medskip

We are concerned with how to select the item set $X$ in Step~2
that ensures the minimal equilibrium price vector $\pminmin$
at the termination of the algorithm.
More specifically, we aim to formulate propositions
for {\sc Lyapunov Minimization}
that correspond to 
those for {\sc Ascending Auction}
stated in Section~\ref{sec:review-auction}, namely,
Theorem~\ref{thm:min-overdemand-unit} for the choice of minimal overdemanded sets,
Theorem~\ref{thm:max-excess} for maximal excess-demand sets,
and Theorem~\ref{thm:excess} for (general) excess-demand sets.

With the characterization of a minimal overdemanded set
given in Proposition~\ref{prop:overdemand-Lyap},
the following proposition for {\sc Lyapunov Minimization} is immediate from 
Theorem~\ref{thm:min-overdemand-unit} for {\sc Ascending Auction}.

\begin{prop}
\label{prop:overdemand-Lyap-algo}
The algorithm {\sc Lyapunov Minimization} finds 
the minimal minimizer  $\pminmin$ of the Lyapunov function $L$
in \eqref{eqn:Lyap-unit}
if, in each iteration, the item set $X$ selected in Step~2
is a minimal subset of $N$ satisfying $L(p+\chi_X) < L(p)$.
\end{prop}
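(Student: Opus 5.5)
The plan is to reduce the statement directly to Theorem~\ref{thm:min-overdemand-unit}, using Proposition~\ref{prop:overdemand-Lyap} to translate between the two algorithms. The claim is that, for this choice of $X$, {\sc Lyapunov Minimization} and {\sc Ascending Auction} are literally the same procedure. Both start from the same $p^\circ \le \pminmin$ and both update $p := p + \chi_X$. So it suffices to check two things at every current price $p$: the stopping tests in Step~1 agree, and the admissible choices of $X$ in Step~2 agree.

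The first step is to compare the stopping rules. By Proposition~\ref{prop:overdemand-Lyap}, a set $X$ is overdemanded at $p$ if and only if $L(p+\chi_X) < L(p)$. Hence ``no overdemanded set at $p$'' holds exactly when $L(p+\chi_X)\ge L(p)$ for all $X \subseteq N$, so the two Step~1 tests coincide.

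The second step is to compare the selection rules. The second part of Proposition~\ref{prop:overdemand-Lyap} says that $X$ is a minimal overdemanded set at $p$ exactly when $X$ is a minimal subset of $N$ with $L(p+\chi_X)<L(p)$. So the permitted choices in Step~2 are identical, and any run of {\sc Lyapunov Minimization} under the stated rule is a run of {\sc Ascending Auction} under the rule of Theorem~\ref{thm:min-overdemand-unit}. By induction on the iterations, the two price sequences coincide.

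The last step is to identify the output. Theorem~\ref{thm:min-overdemand-unit} says this run terminates with the minimal equilibrium price $\pminmin$. By the proposition characterizing equilibrium prices as minimizers of $L$, the minimal equilibrium price is precisely the minimal minimizer of $L$, so the output is the minimal minimizer of $L$. No step here is a genuine obstacle. The only care needed is to note that Proposition~\ref{prop:overdemand-Lyap} holds at every $p\in\Z^n_+$, since all iterates stay nonnegative; this ensures the equivalence can be applied at each iteration, not just at the starting price.
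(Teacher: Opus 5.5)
Your proposal is correct and takes the paper's route. The paper also treats this statement as immediate from Theorem~\ref{thm:min-overdemand-unit}, using the equivalence in Proposition~\ref{prop:overdemand-Lyap} and the identification of equilibrium prices with minimizers of $L$; your explicit check that the Step~1 stopping tests and Step~2 choices coincide at every iterate spells out what the paper leaves implicit.
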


The (unique) maximal excess-demand set can be characterized in terms of the Lyapunov function
as its (unique) minimal steepest descent direction.

\begin{prop}[{\cite[Lemma 5.6(i)]{MSY16}}]
\label{prop:uniq-max-excess-minimizer}
Let $p \in \Z^n_+$.
A set $X\subseteq N$  is the (unique) maximal excess-demand set 
\eqref{eqn:def-excess-demand} at price $p$
if and only if $X$ is the (unique) minimal minimizer
of $L(p+\chi_X) -L(p)$.
\end{prop}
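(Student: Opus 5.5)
By Proposition~\ref{prop:Lyap-diff}, $L(p+\chi_Y)-L(p) = -\delta(Y;p)$ for every $Y\subseteq N$. Hence the function $Y \mapsto L(p+\chi_Y)-L(p)$ and the negated deficiency $-\delta(\cdot\,;p)$ coincide. Consequently, $X$ is the unique minimal minimizer of $L(p+\chi_Y)-L(p)$ over $Y\subseteq N$ if and only if $X$ is the unique minimal maximizer of $\delta(Y;p)=|O(Y,p)|-|Y|$. Proposition~\ref{prop:excess-demand-overdemanded}(5) says that the latter holds exactly when $X$ is the unique maximal excess-demand set. Chaining these two equivalences gives the claim.

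It remains to check that the phrase ``the (unique) minimal minimizer'' makes sense, i.e., that the family of maximizers of $\delta(\cdot\,;p)$ has a unique minimal member. I would justify this directly rather than only citing Proposition~\ref{prop:excess-demand-overdemanded}(5). The function $Y\mapsto |O(Y,p)|$ is supermodular: a bidder lies in $O(Y,p)$ iff $\widetilde{D}_b(p)\subseteq Y$, and the indicator of $\{Y: \widetilde{D}_b(p)\subseteq Y\}$ satisfies
\[
[\,S\subseteq Y\cup Z\,]+[\,S\subseteq Y\cap Z\,]\ \ge\ [\,S\subseteq Y\,]+[\,S\subseteq Z\,].
\]
Since $|Y|$ is modular, $\delta(\cdot\,;p)$ is supermodular. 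Hence its maximizers form a lattice closed under $\cap$ and $\cup$, and the intersection of all maximizers is the unique minimal one.

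The only step I expect to need care is whether Proposition~\ref{prop:excess-demand-overdemanded}(5) may be used as given. If it may, the proof is just the two-line chain above. If it may not, I would prove directly that the minimal maximizer $X^*$ of $\delta$ is excess-demand and contains every excess-demand set.

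For the first part: if some nonempty $Z\subseteq X^*$ had $\delta(X^*)-\delta(X^*\setminus Z)\le 0$, then $X^*\setminus Z$ would be a maximizer strictly smaller than $X^*$, a contradiction. Here I use that \eqref{eqn:def-excess-demand2} is the same as $\delta(X)-\delta(X\setminus Z)>0$.

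For the second part: given an excess-demand set $X$, supermodularity gives
\[
\delta(X\cup X^*)-\delta(X^*)\ \ge\ \delta(X)-\delta(X\cap X^*).
\]
If $X\not\subseteq X^*$, the right-hand side is positive by the excess-demand property applied with $Z=X\setminus X^*$. This contradicts the maximality of $X^*$.

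Finally, $X^*$ is maximal among excess-demand sets. This is because it is itself an excess-demand set and contains every excess-demand set, which completes the equivalence.
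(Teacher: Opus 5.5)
Your proposal is correct and follows the paper's route. The paper cites \cite{MSY16}, but Table~\ref{table:unit-demand} obtains this result exactly as your chain of Proposition~\ref{prop:Lyap-diff} with Proposition~\ref{prop:excess-demand-overdemanded}(5); your self-contained argument is the appendix proof (Proposition~\ref{prop:excess-Lnat-union}(2) and the proof of Proposition~\ref{prop:excess-Lnat-Rem53}(2), applied to $\rho(Y)=L(p+\chi_Y)=L(p)-\delta(Y;p)$), except that you check supermodularity of $|O(\cdot,p)|$ elementarily rather than via \Lnat-convexity, and showing that every excess-demand set lies in $X^*$ lets you skip the union-closure step.

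The one thing to make explicit is that $X^*$ is an excess-demand set only when $X^*\neq\emptyset$, i.e., when some overdemanded set exists at $p$; otherwise $\emptyset$ is the minimal minimizer and there is no excess-demand set, a degenerate case the statement itself tacitly excludes.
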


With this characterization of the maximal  excess-demand set,
the following proposition for {\sc Lyapunov Minimization} is immediate from 
Theorem~\ref{thm:max-excess} for {\sc Ascending Auction}.

\begin{prop}
\label{prop:max-excess-Lyap}
The algorithm {\sc Lyapunov Minimization} finds 
the minimal minimizer  $\pminmin$ of 
the Lyapunov function $L$
in \eqref{eqn:Lyap-unit}
if, in each iteration, the item set $X$ selected in Step~2
is the minimal minimizer of $L(p+\chi_X) -L(p)$.
\end{prop}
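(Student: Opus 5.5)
The statement is Proposition~\ref{prop:max-excess-Lyap}. I will obtain it by translating Theorem~\ref{thm:max-excess} into the language of the Lyapunov function, step by step, using the dictionary between the two algorithms.

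First, I will identify the two algorithms. By Proposition~\ref{prop:overdemand-Lyap}, the stopping test in Step~1 of {\sc Lyapunov Minimization} coincides with that of {\sc Ascending Auction}. Indeed, $L(p+\chi_X)\ge L(p)$ holds for all $X$ exactly when no set is overdemanded at $p$. Both algorithms start from the same $p^\circ\le\pminmin$ and perform the same update $p:=p+\chi_X$. So it suffices to check that the selection rules in Step~2 agree at every iterate $p$.

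Next, I will match the selection rules. By Proposition~\ref{prop:uniq-max-excess-minimizer}, the unique minimal minimizer $X$ of $L(p+\chi_X)-L(p)$ is exactly the unique maximal excess-demand set at $p$. I also need this $X$ to be admissible in Step~2 whenever Step~1 does not stop. If some $Y$ satisfies $L(p+\chi_Y)<L(p)$, then the minimum of $L(p+\chi_X)-L(p)$ is negative, so the minimal minimizer $X$ satisfies $L(p+\chi_X)<L(p)$. Equivalently, by Proposition~\ref{prop:excess-demand-overdemanded}(1), the maximal excess-demand set is overdemanded. This requires an excess-demand set to exist when an overdemanded set exists. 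That follows from Proposition~\ref{prop:excess-demand-overdemanded}(2): a minimal overdemanded set exists and is an excess-demand set.

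By induction on the iterations, the two algorithms therefore produce identical price trajectories. Theorem~\ref{thm:max-excess} says the auction terminates at $\pminmin$, so {\sc Lyapunov Minimization} terminates there as well.

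Finally, I will invoke the proposition equating equilibrium prices with minimizers of $L$, and the minimal equilibrium price with the minimal minimizer. This shows that $\pminmin$ is the minimal minimizer of $L$, which completes the proof.

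I expect no step to be a serious obstacle. The only point needing care is the consistency check that the chosen $X$ is a genuine descent direction, so that Step~2 is well-defined; this is handled in the second step above.
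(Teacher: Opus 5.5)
Your proposal is correct and takes the same route as the paper. The paper derives this proposition directly from Theorem~\ref{thm:max-excess} via the characterization in Proposition~\ref{prop:uniq-max-excess-minimizer}, and your extra checks (matching the stopping rules via Proposition~\ref{prop:overdemand-Lyap}, and confirming that the chosen $X$ is a genuine descent direction whenever Step~1 does not stop) only spell out what the paper leaves implicit.
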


\medskip

To characterize excess-demand sets, we consider the condition 
\begin{equation}
\label{eqn:set-cond3-Lyap} 
L(p + \chi_Y) > L(p+ \chi_X) \qquad (\forall Y \subsetneq X)
\end{equation}
for $X \subseteq N$.
We say that a nonempty set $X$ is \textit{$(L,p)$-minimal}
if it satisfies \eqref{eqn:set-cond3-Lyap}.
First, we observe the following facts.
\begin{itemize}
\item
\eqref{eqn:set-cond3-Lyap} for $Y = \emptyset$ implies $L(p ) > L(p+ \chi_X)$.

\item
A minimal excess-demand set at $p$ is $(L,p)$-minimal.
\\
(Proof)
By Proposition~\ref{prop:excess-demand-overdemanded}(3),
a minimal excess-demand set is a minimal overdemanded set,
which is,
by Proposition~\ref{prop:overdemand-Lyap},
a minimal subset $X$ satisfying 
$L(p+\chi_X) < L(p)$.
The minimality of $X$ implies that there exists no proper subset $Y$ of $X$
satisfying 
$L(p + \chi_Y) \le L(p+ \chi_X)$.
Thus, $X$ satisfies \eqref{eqn:set-cond3-Lyap}.

\item
The maximal excess-demand set at $p$ is $(L,p)$-minimal. 
\\
(Proof)
By Proposition~\ref{prop:uniq-max-excess-minimizer},
the maximal excess-demand set $X$
is the minimal minimizer of $L$,
which implies that there exists no proper subset $Y$ of $X$
satisfying 
$L(p + \chi_Y) \le L(p+ \chi_X)$.
Thus, $X$ satisfies \eqref{eqn:set-cond3-Lyap}.

\end{itemize}

Next, we show that every  excess-demand set is
$(L,p)$-minimal, and vice versa.

\begin{prop}
\label{prop:excess-demand-cond}
Let $p \in \Z^n_+$.
A set $X\subseteq N$  is an excess-demand set 
\eqref{eqn:def-excess-demand}
at price $p$
if and only if $X$ is $(L,p)$-minimal
in \eqref{eqn:set-cond3-Lyap}.
\end{prop}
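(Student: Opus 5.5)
The plan is to translate both conditions into statements about the deficiency via Proposition~\ref{prop:Lyap-diff} and then compare them term by term. By \eqref{eqn:Lyap-diff-0} we have $L(p+\chi_Y) = L(p) - \delta(Y;p)$ for every $Y \subseteq N$. Hence the $(L,p)$-minimality condition \eqref{eqn:set-cond3-Lyap} is equivalent to
\[
\delta(X;p) > \delta(Y;p) \qquad (\forall Y \subsetneq X),
\]
that is,
\[
|O(X,p)| - |X| > |O(Y,p)| - |Y| \qquad (\forall Y \subsetneq X).
\]

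Next I would reparametrize the proper subsets $Y$ of $X$ as $Y = X \setminus Z$ with $\emptyset \neq Z \subseteq X$. This is a bijection between proper subsets $Y$ (including $Y=\emptyset$, which corresponds to $Z = X$) and nonempty $Z \subseteq X$. Since $|X| - |X\setminus Z| = |Z|$, the inequality becomes
\[
|O(X,p)| - |O(X\setminus Z,p)| > |Z| \qquad (\emptyset \neq \forall Z \subseteq X).
\]
This is exactly the alternative expression \eqref{eqn:def-excess-demand2}, which the paper has already shown equivalent to \eqref{eqn:def-excess-demand} via \eqref{eqn:def-excess-demand3a} and \eqref{eqn:def-excess-demand3b}. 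The only remaining check is nonemptiness: an excess-demand set is nonempty by definition, and an $(L,p)$-minimal set is nonempty by definition as well.

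I expect no real obstacle here. The only point needing care is the case $Y=\emptyset$ versus $Z=X$. Proposition~\ref{prop:Lyap-diff} gives $\delta(\emptyset;p)=0$, because $O(\emptyset,p)=\emptyset$ (demand sets are nonempty). So this case yields $|O(X,p)|>|X|$ on both sides, and it is covered by the bijection. If one wanted to avoid relying on \eqref{eqn:def-excess-demand2}, the fallback would be to verify \eqref{eqn:def-excess-demand3a} directly. A bidder in $O(X,p)$ demands some item of $Z$ exactly when $\widetilde{D}_b(p) \not\subseteq X\setminus Z$, and this is immediate from the definitions.
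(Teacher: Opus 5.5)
Your proof is correct and follows essentially the same route as the paper: the substitution $Y = X\setminus Z$, together with \eqref{eqn:Lyap-diff-0}, identifies $L(p+\chi_{X\setminus Z}) - L(p+\chi_X)$ with $|O(X,p)| - |O(X\setminus Z,p)| - |Z|$, and \eqref{eqn:def-excess-demand3a}--\eqref{eqn:def-excess-demand3b} then match this with the definition \eqref{eqn:def-excess-demand}. The only difference is that you spell out the nonemptiness requirement and the case $Y=\emptyset$ (i.e., $Z=X$), which the paper leaves implicit.
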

\begin{proof}
By the definition \eqref{eqn:def-excess-demand},
$X$ is an excess-demand set at $p$ if (and only if)
the inequality
$|U(Z, p) \cap O(X, p)| - |Z| > 0$
holds for all nonempty $Z \subseteq X$.
Since
\begin{align*}
& |U(Z, p) \cap O(X, p)| - |Z| \\
& = |O(X, p)| - |O(X \setminus Z, p)| - |X| + |X \setminus Z|
\qquad \mbox{(by \eqref{eqn:def-excess-demand3a}, \eqref{eqn:def-excess-demand3b})}
\\
& = (|X \setminus Z|- |O(X \setminus Z, p)|) - (|X|- |O(X, p)|)
\\
& = L(p+\chi_{X \setminus Z}) - L(p+\chi_X)
\qquad \mbox{(by \eqref{eqn:Lyap-diff-0})},
\end{align*}
$X$ is an excess-demand set
if and only if
$L(p + \chi_{X\setminus Z}) > L(p+ \chi_X)$
for all nonempty 
$Z \subseteq X$.
The latter condition is equivalent to 
\eqref{eqn:set-cond3-Lyap}.
\end{proof}

With the characterization of an excess-demand set
given in Proposition~\ref{prop:excess-demand-cond}, 
the following proposition for {\sc Lyapunov Minimization} is immediate from 
Theorem~\ref{thm:excess} for {\sc Ascending Auction}.

\begin{prop}
\label{prop:excess-Lyap}
The algorithm {\sc Lyapunov Minimization} finds 
the minimal minimizer  $\pminmin$ of 
the Lyapunov function $L$
in \eqref{eqn:Lyap-unit}
if, in each iteration, the item set $X$ selected in Step~2 
is $(L,p)$-minimal
in \eqref{eqn:set-cond3-Lyap}.
\end{prop}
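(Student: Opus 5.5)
The statement follows directly from Theorem~\ref{thm:excess} and Proposition~\ref{prop:excess-demand-cond}, in the same way as Propositions~\ref{prop:overdemand-Lyap-algo} and \ref{prop:max-excess-Lyap} follow from their auction counterparts. The plan is to show that, with this choice of $X$, {\sc Lyapunov Minimization} and {\sc Ascending Auction} with excess-demand sets generate identical price trajectories and stop at the same point.

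First we compare the stopping rules in Step~1. By Proposition~\ref{prop:overdemand-Lyap}, a set $X$ satisfies $L(p+\chi_X)<L(p)$ if and only if $X$ is overdemanded at $p$. Hence the Lyapunov algorithm stops at $p$ exactly when no overdemanded set exists at $p$, which is the stopping rule of {\sc Ascending Auction}. When the algorithm does not stop, an $(L,p)$-minimal set exists, so Step~2 can always be carried out. To see this, take a minimal set $X$ with $L(p+\chi_X)<L(p)$. Any proper subset $Y\subsetneq X$ then satisfies $L(p+\chi_Y)\ge L(p)>L(p+\chi_X)$, so $X$ is $(L,p)$-minimal. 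Equivalently, a minimal overdemanded set is excess-demand by Proposition~\ref{prop:excess-demand-overdemanded}(2).

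Second, we compare the choices in Step~2. By Proposition~\ref{prop:excess-demand-cond}, the $(L,p)$-minimal sets at $p$ are exactly the excess-demand sets at $p$. Each such set also satisfies $L(p+\chi_X)<L(p)$ (take $Y=\emptyset$ in \eqref{eqn:set-cond3-Lyap}), so it is a legal choice in Step~2 of {\sc Lyapunov Minimization}. Step~3 is the same update $p:=p+\chi_X$ in both algorithms, and both start from $p^\circ\le\pminmin$. By induction on the iteration count, every run of {\sc Lyapunov Minimization} with $(L,p)$-minimal choices is therefore a run of {\sc Ascending Auction} with excess-demand choices.

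By Theorem~\ref{thm:excess}, such a run terminates at the minimal equilibrium price $\pminmin$. By the proposition after \eqref{eqn:Lyap-unit}, $\pminmin$ is the minimal minimizer of $L$. The only care needed is to check that the stopping tests and the admissible choices agree at every iteration, which the first two steps establish. This yields the claim.
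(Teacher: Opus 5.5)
Your proposal is correct and follows the paper's own route: the paper treats this proposition as an immediate consequence of Theorem~\ref{thm:excess}, using Proposition~\ref{prop:excess-demand-cond} to identify excess-demand sets with $(L,p)$-minimal sets and Proposition~\ref{prop:overdemand-Lyap} to match the stopping rules. Your extra checks make explicit what the paper leaves as ``immediate'': that an $(L,p)$-minimal set exists whenever Step~1 does not stop, and that the two algorithms produce the same run iteration by iteration.
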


It may be worth noting that $(L,p)$-minimality
is, in fact, the minimality 
with respect to a partial order $\prec$ on $2^N$
defined by:
$Y \prec Z$
$\Leftrightarrow$
[$Y \subseteq Z$ and $L(p+ \chi_{Y}) \le L(p+ \chi_Z)$].

 The correspondence of the item sets used in
the algorithms {\sc Ascending Auction} and 
{\sc Lyapunov Minimization} is summarized in
Table \ref{table:unit-demand}.

\begin{table} 
\begin{center}
\caption{Characterizations of the item sets for the unit-demand model}
\label{table:unit-demand}

\medskip

\addtolength{\tabcolsep}{-2pt}
 \begin{tabular}{lcl}
 {\sc Ascending Auction} &  & {\sc Lyapunov Minimization}
\\ 
\hline
minimal overdemanded set & $\Longleftrightarrow$ & minimal $X$ with $L(p+\chi_X) < L(p)$ 
\\ 
 \qquad $\Updownarrow$ Prop.~\ref{prop:excess-demand-overdemanded}(3) & 
Prop.~\ref{prop:overdemand-Lyap} &
\\ 
minimal excess-demand set &   &  
\\  \hline
maximal excess-demand set  & $\Longleftrightarrow$ & minimal minimizer of $L(p+\chi_X) - L(p)$ 
\\ 
 \qquad $\Updownarrow$ Prop.~\ref{prop:excess-demand-overdemanded}(5) & 
Prop.~\ref{prop:uniq-max-excess-minimizer} &
\\ 
\multicolumn{2}{l}{minimal maximizer of deficiency}   &  
\\ \hline
 excess-demand set & $\Longleftrightarrow$ 
&    $(L,p)$-minimal set, i.e., $X$  satisfying 
\\ 
 & Prop.~\ref{prop:excess-demand-cond} 
&   \, \,   $L(p+\chi_Y) > L(p+ \chi_X)$ \   $(\forall Y \subsetneq X)$  
\\ \hline
 \end{tabular}
\addtolength{\tabcolsep}{+2pt}
\end{center}
\end{table}

\section{Minimization of general \Lnat-convex functions}
\label{sec:Lnat-minimize}


In Section~\ref{sec:Lyap-unit}
we introduced the Lyapunov function for the unit-demand model
and provided conditions guaranteeing
the algorithm of {\sc Lyapunov Minimization}
to find the minimal minimizer 
(Propositions \ref{prop:overdemand-Lyap-algo}, \ref{prop:max-excess-Lyap}, 
and \ref{prop:excess-Lyap}).
Motivated by the fact that
the Lyapunov function 
is an \Lnat-convex function,
we extend, in this section, the algorithm of {\sc Lyapunov Minimization}
to general \Lnat-convex functions.
This generalization enables us, in Section~\ref{sec:Lyap-multi}, 
to extend the ascending auction algorithm
for the unit-demand model (Section~\ref{sec:review-auction}) 
to the algorithm for the multi-demand model
described in Section~\ref{sec:auction-multi}.

 A function $g: \Z^n \to \Rinf$ defined on integer lattice points 
is said to be \textit{\Lnat-convex} 
if for every $p, q \in \dom g$ and every nonnegative $\lambda \in \Z_+$,
it holds that
\begin{equation}
\label{eqn:def-Lnat}
  g(p) + g(q) \geq g((p+ \lambda \1) \wedge q) + g(p \vee (q - \lambda \1)),
\end{equation}
where $\dom g= \{p \in \Z^n \mid g(p) < + \infty\}$,
$\1 = (1,1,\ldots, 1)$,
and for any $p, q \in \Z^n$,  $p \wedge q$
and $p \vee q$ denote, respectively, the vectors
of component-wise minimum and maximum of $p$ and $q$.
It follows from \eqref{eqn:def-Lnat} with $\lambda =0$ that
an \Lnat-convex function is a submodular function, 
that is,
\begin{equation}
\label{eqn:subm-Lnat}
g(p) + g(q) \geq g(p \wedge q) + g(p \vee q)
\end{equation}
for every $p,q \in \dom g$.
 The concept of \Lnat-convex function plays a primary role
in the theory of discrete convex analysis \cite{Murota03book,Murota24}.
 The Lyapunov function $L$ 
associated 
with the unit-demand auction model by \eqref{eqn:Lyap-unit}
is known to be \Lnat-convex (\cite{MSY13,MSY16}; see also \cite{Murota16,Murota24}),
where the function $L$ is integer-valued and $\dom L = \Z^n_+$
in our model.

Throughout this section, we assume that $g: \Z^n \to \Rinf$ is an \Lnat-convex function 
such that the set of its minimizers, denoted as $\arg\min g$,  
is nonempty and bounded from below.
 Under these conditions, $\arg\min g$ has a unique minimal vector,
 which is a consequence of the submodularity \eqref{eqn:subm-Lnat}.
We denote the unique minimal minimizer of $g$ by $\pminminL$.
 Note that the Lyapunov function $L$ in \eqref{eqn:Lyap-unit} satisfies the above-mentioned conditions
and that the (unique) minimal minimizer of $L$ coincides with the (unique) equilibrium price vector $\pminmin$.

The following (generic) algorithm is
a straightforward (or formal) generalization
of {\sc Lyapunov Minimization}
in Section~\ref{sec:Lyap-unit}
to an arbitrary \Lnat-convex function $g$.

\medskip

\noindent
{\bf Algorithm}  {\sc \Lnat-convex Minimization}  (generic form)
\\
Step~0:  Set $p:=p^\circ$, where
$p^\circ \in \dom g$ is a vector satisfying
$p^\circ \le \pminminL$.
\\
Step~1:
If $g(p+ \chi_X) \ge g(p)$ for all $X \subseteq N$, then stop.
\\ 
Step~2:
 Find a set $X \subseteq N$ such that $g(p+ \chi_X) < g(p)$.
\\
Step~3:
 Set $p:=p + \chi_X$ and go to Step~1.

\medskip

As in {\sc Lyapunov minimization}, 
the selection of the set $X$ in Step~2 is crucial 
in obtaining the minimal minimizer $\pminminL$.
In analogy to \eqref{eqn:set-cond3-Lyap} 
characterizing excess-demand sets,
we consider the condition 
\begin{equation}
\label{eqn:set-cond3-Lnat} 
g(p + \chi_Y) > g(p+ \chi_X) \qquad (\forall Y \subsetneq X)
\end{equation}
for $X \subseteq N$
and refer to a nonempty set $X$ 
satisfying this condition as a \textit{$(g,p)$-minimal set}.
This condition, when imposed on $X$ in Step~2, guarantees
that $\pminminL$ is found at the termination of the algorithm,
which is stated in Theorem~\ref{thm:excess-Lnat} below.
Note that this theorem corresponds to 
Propositions
\ref{prop:overdemand-Lyap-algo}, \ref{prop:max-excess-Lyap}, and
\ref{prop:excess-Lyap} 
for
{\sc Lyapunov minimization}
for the unit-demand model.

\begin{theorem}
\label{thm:excess-Lnat}
The algorithm {\sc \Lnat-convex Minimization}
finds the minimal minimizer  $\pminminL$ of an \Lnat-convex function 
$g: \Z^n \to \Rinf$
if, in each iteration, the set $X$ selected in Step~2
is $(g,p)$-minimal
in the sense of \eqref{eqn:set-cond3-Lnat},
which is the case if
\\
{\rm (i)}
$X$ is a minimal subset of $N$ satisfying $g(p+\chi_X) < g(p)$, or 
\\
{\rm (ii)}
$X$ is the unique minimal minimizer of $g(p+\chi_X) - g(p)$.
\end{theorem}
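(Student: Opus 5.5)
The proof has two parts. First, cases (i) and (ii) are $(g,p)$-minimal. Second, an invariant is maintained: if $p \le \pminminL$ and $X$ is $(g,p)$-minimal, then $p+\chi_X \le \pminminL$. Termination then follows from the standard optimality criterion for \Lnat-convex functions.

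\textbf{Cases (i) and (ii).} In case (i), any $Y \subsetneq X$ with $g(p+\chi_Y) \le g(p+\chi_X) < g(p)$ would contradict the minimality of $X$. So \eqref{eqn:set-cond3-Lnat} holds, and $X \ne \emptyset$ because $g(p+\chi_X)<g(p)$. In case (ii), $X$ minimizes $g(p+\chi_X)$, so any $Y \subsetneq X$ with $g(p+\chi_Y)\le g(p+\chi_X)$ would also be a minimizer and would contradict the minimality of $X$. Moreover $X\neq\emptyset$, since Step~2 is reached only when some $X$ gives a strict decrease.

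\textbf{Monotonicity invariant.} Suppose $p\le \pminminL$ with $p \in \dom g$, and $X$ is $(g,p)$-minimal. Put $q=\pminminL$ and apply submodularity \eqref{eqn:subm-Lnat} to $p+\chi_X$ and $q$:
\[
g(p+\chi_X)+g(q)\ \ge\ g((p+\chi_X)\wedge q)+g((p+\chi_X)\vee q).
\]
Since $p\le q$, we have $(p+\chi_X)\wedge q = p+\chi_Y$ with $Y=\{i\in X \mid p(i)<q(i)\}\subseteq X$, and $(p+\chi_X)\vee q = q+\chi_{X\setminus Y}$. We have $g(q+\chi_{X\setminus Y})\ge g(q)$ because $q$ is a minimizer, so $g(p+\chi_Y)\le g(p+\chi_X)$. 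By \eqref{eqn:set-cond3-Lnat} this forces $Y=X$, i.e.\ $p(i)<q(i)$ for all $i\in X$, so $p+\chi_X\le q$.

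If $g(q+\chi_{X\setminus Y})$ were infinite, the inequality would be vacuous. Submodularity only requires $p+\chi_X, q\in\dom g$, and $p+\chi_X\in\dom g$ because $g(p+\chi_X)<g(p)<+\infty$; the right-hand side is then finite as well. Hence by induction every iterate stays in $\dom g$ and satisfies $p\le \pminminL$.

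\textbf{Termination and correctness.} The values $g(p)$ strictly decrease. The iterates stay in the finite box $[p^\circ,\pminminL]$, so the algorithm stops after finitely many steps. At the stopping point, $g(p+\chi_X)\ge g(p)$ for all $X\subseteq N$.

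The main step is to conclude that such $p$ is a global minimizer. I would invoke the known optimality criterion for \Lnat-convex functions: $p$ is a minimizer iff $g(p)\le g(p\pm\chi_X)$ for all $X$. So I still need the condition for the directions $-\chi_X$. Since $p\le \pminminL$, the vector $p-\chi_X$ is also $\le \pminminL$, but that alone does not give $g(p-\chi_X)\ge g(p)$, so I would argue instead via the steepest-descent fact used in \cite{MSY16}. For $p\le q$ with $q$ a minimizer, apply \eqref{eqn:def-Lnat} to the pair $(p,q)$ with suitable $\lambda$ to reduce $\|q-p\|_\infty$. Alternatively, and more simply, take $\lambda=1$ iteratively, as in the proof that ascending \Lnat-minimization from below reaches the minimal minimizer: with $X=\arg\max_i (q(i)-p(i))$, \eqref{eqn:def-Lnat} gives $g(p)+g(q)\ge g(p+\chi_X)+g(q-\chi_X)$ when $q\ne p$.

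Suppose $p\ne q$. Then $g(p+\chi_X)\ge g(p)$ gives $g(q-\chi_X)\le g(q)$, so $q-\chi_X$ is a minimizer. But $q-\chi_X\not\ge q$ contradicts the minimality of $q=\pminminL$. Hence $p=\pminminL$. This last step, choosing the right $\lambda$ in \eqref{eqn:def-Lnat} to produce a smaller minimizer, is where I expect the care to be needed.
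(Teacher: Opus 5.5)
Your proof is correct. The invariant step is the paper's Lemma~\ref{lem:donotexceed} in a leaner form. The paper invokes the exchange inequality \cite[Theorem 7.7]{Murota03book} with $Z=\arg\max_i\{(p+\chi_X)(i)-\pminminL(i)\}$. Since $p+\chi_X-\pminminL\le\1$, that inequality is exactly submodularity applied to $p+\chi_X$ and $\pminminL$, which is what you use; your $Y$ is the paper's $X\setminus Z$.

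The genuine difference is in the final step. The paper proves Lemma~\ref{lem:localopt}. It starts from the general local optimality criterion for \Lnat-convex functions (no descent in any direction $\pm\chi_X$, \cite[Theorem 7.14]{Murota03book}). It then derives the downward half from \eqref{eqn:def-Lnat} with $\lambda=1$, using a minimizer above $\hat p$. The terminal $p$ is therefore a minimizer lying below $\pminminL$, hence equal to it.

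You skip the local optimality theorem entirely. If the terminal $p$ differs from $q=\pminminL$, one \Lnat-inequality together with the stopping condition makes $q-\chi_X$ a minimizer strictly below $\pminminL$, which is a contradiction. Your route is more elementary and self-contained. The paper's yields a reusable criterion valid below \emph{any} minimizer. Yours gives that too if you repeatedly replace $q$ by $q-\chi_X$, which stays $\ge p$.

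On the step you flagged as needing care, the choice of $\lambda$ needs correcting. Take $X=\arg\max_i(q(i)-p(i))$ and $m=\|q-p\|_\infty\ge 1$. A single application of \eqref{eqn:def-Lnat} with $\lambda=m-1$ (not $\lambda=1$, and with no iteration) gives $(p+\lambda\1)\wedge q=q-\chi_X$ and $p\vee(q-\lambda\1)=p+\chi_X$. Hence $g(p)+g(q)\ge g(p+\chi_X)+g(q-\chi_X)$. Alternatively, $\lambda=1$ gives the same inequality with $X$ replaced by $\suppp(q-p)$. That works just as well, and it is the instance the paper uses in Lemma~\ref{lem:localopt}.
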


\begin{proof}
By Lemma \ref{lem:donotexceed} below,
the inequality $p \le \pminminL$ is maintained in each iteration.
Then, it follows from \eqref{eqn:def-Lnat}
and Lemma~\ref{lem:localopt} below
that the termination criterion in Step~1 is satisfied only when $p = \pminminL$.
Therefore, the output of the algorithm coincides with $\pminminL$.
The $(g,p)$-minimality of $X$ in cases (i) and (ii)
is immediate from the definition \eqref{eqn:set-cond3-Lnat}. 
\end{proof}

\begin{lemma}
\label{lem:donotexceed}
 Let $p \in \dom g$  be a vector such that $p \le \pminminL$ and $p \ne \pminminL$.
 For every $(g,p)$-minimal set $X \subseteq N$,
it holds that $p + \chi_X\le \pminminL$.
\end{lemma}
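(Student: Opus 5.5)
Write $q=\pminminL$ and $Y = X \cap \{i \in N \mid p(i) = q(i)\}$, the part of $X$ whose increment would push $p+\chi_X$ above $q$. Since $p \le q$ and everything is integral, $(p+\chi_X)\wedge q = p + \chi_{X\setminus Y}$, and the lemma is exactly the claim $Y=\emptyset$. I will argue by contradiction: assume $Y \ne \emptyset$ and apply the submodularity \eqref{eqn:subm-Lnat} to the pair $p+\chi_X$ and $q$.

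First, the lattice operations. We have $(p+\chi_X)\wedge q = p+\chi_{X\setminus Y}$. The join is $(p+\chi_X)\vee q = q + \chi_Y$: on $Y$ the value is $q(i)+1$, on $X\setminus Y$ it is $\max(p(i)+1,q(i))=q(i)$ because $p(i)<q(i)$ there, and off $X$ it is $q(i)$.

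Second, check domains. We have $q\in\dom g$. Also $p+\chi_X\in\dom g$, since otherwise \eqref{eqn:set-cond3-Lnat} with $Y'=\emptyset$ would give $g(p)>+\infty$, which is impossible. Submodularity then yields
\[
g(p+\chi_X)+g(q) \ \ge\ g(p+\chi_{X\setminus Y}) + g(q+\chi_Y),
\]
and both terms on the right are finite.

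Third, derive the contradiction. Since $q$ is a minimizer, $g(q+\chi_Y)\ge g(q)$, so $g(p+\chi_X)\ge g(p+\chi_{X\setminus Y})$. But $Y\neq\emptyset$ makes $X\setminus Y\subsetneq X$, and $(g,p)$-minimality then says $g(p+\chi_{X\setminus Y})>g(p+\chi_X)$. This is a contradiction, so $Y=\emptyset$ and $p+\chi_X\le q$.

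The main point to get right is the join computation. Every $i\in X\setminus Y$ satisfies $p(i)+1\le q(i)$, and this integrality is what makes the join equal $q+\chi_Y$ exactly. The argument uses only submodularity (the case $\lambda=0$ of \eqref{eqn:def-Lnat}) and never uses the hypothesis $p\ne q$. That hypothesis seems to matter only in that a $(g,p)$-minimal set would not exist when $p=q$.
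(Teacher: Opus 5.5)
Your proof is correct and is essentially the paper's argument: your $Y$ is the paper's set $Z=\{i\in N\mid p(i)+\chi_X(i)>\pminminL(i)\}$, and the contradiction comes from the same inequality $g(p+\chi_X)+g(\pminminL)\ge g(p+\chi_{X\setminus Y})+g(\pminminL+\chi_Y)$, combined with the minimality of $\pminminL$ and with $(g,p)$-minimality applied to $X\setminus Y$. The only difference is how that inequality is justified: the paper cites an \Lnat-convexity property (Theorem~7.7 of Murota's book, with $Y$ the argmax of $p+\chi_X-\pminminL$), whereas you note that $p+\chi_X-\pminminL\le\1$ makes the meet and join exactly these two vectors, so plain submodularity suffices; this is a correct and slightly more elementary route, and your domain check and your remark that $p\ne\pminminL$ is never used are also accurate.
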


\begin{proof}
To prove by contradiction,
we assume that the inequality $p + \chi_X\le \pminminL$ fails
for some $(g,p)$-minimal set $X$.
 Let
\[
 q = p + \chi_X, \qquad
Z = \{ i \in N \mid q(i) > \pminminL(i)\}.
\]
By our assumption, we have $Z \ne \emptyset$.  
Since
\begin{equation}
\label{eqn:Zmax}
Z = \{ i \in N \mid q(i) = \pminminL(i) + 1 \}
= \arg\max_{i \in N} \{q(i) - \pminminL(i)\},
\end{equation}
\Lnat-convexity of $g$ implies that 
\begin{align}
\label{eqn:Lnat-exc-2}
 g(q) + g(\pminminL) \ge  g(q - \chi_Z) + g(\pminminL + \chi_Z)
\end{align}
(see \cite[Theorem 7.7]{Murota03book}).
 Since $\pminminL$ is a minimizer of $g$, we have 
$g(\pminminL) \le  g(\pminminL + \chi_Z)$, which, together with \eqref{eqn:Lnat-exc-2},
implies that $g(q)  \ge  g(q - \chi_Z)$.
Since $Z \subseteq X$ by \eqref{eqn:Zmax} and $p \le \pminminL$,
this inequality $g(q)  \ge  g(q - \chi_Z)$ can be rewritten as
$g(p + \chi_X) \ge g(p + \chi_{X \setminus Z})$,
a contradiction to the condition \eqref{eqn:set-cond3-Lnat} with $Y =X \setminus Z$.
\end{proof}

The following fact is implicit in 
Murota and Shioura \cite[Proof of Theorem~1.3]{MS14exbndLmin}
and explicit in Shioura \cite[Theorem~2.7]{Shi17L} (without proof).
For completeness we provide a self-contained proof.

\begin{lemma}
\label{lem:localopt}
Let $g: \Z^n \to \Rinf$
be an \Lnat-convex function and 
$\hat p \in \dom g$  be a vector such that 
there exists a minimizer $\pminL$ of $g$ satisfying $\hat p \le \pminL$.
Then $\hat p$ is a minimizer of $g$ if and only if
$g(\hat p) \leq g(\hat p + \chi_{X})$ for all $X \subseteq N$.
\end{lemma}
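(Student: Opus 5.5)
The ``only if'' direction is trivial: if $\hat p$ minimizes $g$, then $g(\hat p)\le g(\hat p+\chi_X)$ for every $X$. For the ``if'' direction I argue by contradiction. Suppose $g(\hat p)\le g(\hat p+\chi_X)$ for all $X\subseteq N$, but $g(\hat p)>g(\pminL)$. Among all minimizers $q$ of $g$ with $q\ge \hat p$ (the set is nonempty, since it contains $\pminL$), pick one minimizing $\|q-\hat p\|_\infty$. Call it $q$ and set $k=\|q-\hat p\|_\infty$. Since $\hat p$ is not a minimizer, $q\ne\hat p$, so $k\ge 1$.

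Let $Z=\arg\max_{i\in N}\{q(i)-\hat p(i)\}=\{i\in N\mid q(i)-\hat p(i)=k\}$, which is nonempty. I use the same consequence of \Lnat-convexity that appears in the proof of Lemma~\ref{lem:donotexceed} (\cite[Theorem 7.7]{Murota03book}): for $p,q\in\dom g$ with $Z=\arg\max_i\{q(i)-p(i)\}$ and $q\not\le p$,
\[
g(q)+g(p)\ge g(q-\chi_Z)+g(p+\chi_Z).
\]
Apply this with $p=\hat p$. By hypothesis $g(\hat p+\chi_Z)\ge g(\hat p)$, so $g(q-\chi_Z)\le g(q)$. Hence $q':=q-\chi_Z$ is again a minimizer, and $q'\ge\hat p$ because $q(i)-\hat p(i)=k\ge1$ on $Z$.

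Next I need a strict decrease. Components of $q-\hat p$ outside $Z$ are at most $k-1$, and on $Z$ they drop to $k-1$. Thus $\|q'-\hat p\|_\infty=k-1$, contradicting the choice of $q$. Therefore $q=\hat p$, i.e.\ $\hat p$ is a minimizer.

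The main obstacle is justifying the exchange inequality above directly from \eqref{eqn:def-Lnat}, since the paper only cites it. If needed, I would derive it by taking $\lambda=k-1$ in \eqref{eqn:def-Lnat} with the roles $p\leftarrow\hat p$, $q\leftarrow q$. This gives $(\hat p+(k-1)\1)\wedge q=q-\chi_Z$ and $\hat p\vee(q-(k-1)\1)=\hat p+\chi_Z$. Both identities hold because $q-\hat p$ takes values in $[0,k]$ and equals $k$ exactly on $Z$, using $q\ge\hat p$. With this, the argument is self-contained.
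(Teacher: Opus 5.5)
Your proof is correct, but it takes a different route from the paper's. The paper invokes the local optimality criterion for \Lnat-convex functions (\cite[Theorem 7.14]{Murota03book}: $\hat p$ is a minimizer if and only if $g(\hat p)\le g(\hat p\pm\chi_X)$ for all $X\subseteq N$). It therefore only has to supply the missing downward condition $g(\hat p-\chi_X)\ge g(\hat p)$. This comes from a single application of \eqref{eqn:def-Lnat} with $p=\hat p-\chi_X$, $q=\pminL$, $\lambda=1$: because $\hat p\le\pminL$, the point $(\hat p-\chi_X+\1)\wedge\pminL$ has the form $\hat p+\chi_Z$, and the other term is bounded below by $g(\pminL)$.

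You bypass the local optimality theorem altogether. You take a minimizer $q\ge\hat p$ closest to $\hat p$ in the $\ell_\infty$-norm and lower it by $\chi_Z$, which keeps you inside the set of minimizers lying above $\hat p$ while strictly decreasing the distance. The exchange inequality you need follows directly from \eqref{eqn:def-Lnat} with $\lambda=k-1\ge 0$, and your two lattice identities $(\hat p+(k-1)\1)\wedge q=q-\chi_Z$ and $\hat p\vee(q-(k-1)\1)=\hat p+\chi_Z$ are right.

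The trade-off is as follows. The paper's argument is shorter and uses the defining inequality only with $\lambda=1$, but it rests on a cited theorem whose own proof is nontrivial. Yours is genuinely self-contained from the definition, and it uses exactly the same exchange inequality as the proof of Lemma~\ref{lem:donotexceed}, so both lemmas rest on one tool. Your argument can also be rephrased as a direct descent started at $\pminL$ rather than as a contradiction. In that form it is constructive: it exhibits a chain of minimizers from $\pminL$ down to $\hat p$ in exactly $\|\pminL-\hat p\|_\infty$ unit steps.
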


\begin{proof}
The local optimality criterion for \Lnat-convex functions
(see \cite[Theorem 7.14]{Murota03book}) states that
$\hat p$ is a minimizer of $g$ if and only if
$g(\hat p) \leq g(\hat p + \chi_{X})$ and
$g(\hat p) \leq g(\hat p - \chi_{X})$ for all $X \subseteq N$.
The former condition is satisfied by the assumption and the latter can be shown as follows.
Consider the inequality \eqref{eqn:def-Lnat}
for $p = \hat p - \chi_{X}$, $q = \pminL$, and $\lambda =1$:
\[
 g(\hat p - \chi_{X}) + g(\pminL) \geq g((p+  \1) \wedge q) + g(p \vee (q -  \1)).
\]
Here, 
$(p+  \1) \wedge q = (\hat p - \chi_{X} + \1) \wedge \pminL =  \hat p + \chi_{Z}$ 
for some $Z  \subseteq N$ and hence
\[
 g((p+  \1) \wedge q) 
= g(\hat p + \chi_{Z}) \geq  g(\hat p),
\]
whereas 
$ g(p \vee (q -  \1)) \ge g(\pminL)$ 
since $\pminL$ minimizes $g$.
Therefore,
\[
 g(\hat p - \chi_{X}) + g(\pminL) \geq g(\hat p) + g(\pminL),
\]
which shows
$ g(\hat p - \chi_{X}) \geq g(\hat p)$.
\end{proof}

\begin{remark} \rm
\label{rem:gpminfam-unimax}
We mention the following facts, 
although we do not need them in our argument:
\par
(1) The family of $(g,p)$-minimal sets has a unique maximal set.
\par
(2) 
 The maximal $(g,p)$-minimal set coincides with the minimal minimizer of $g(p + \chi_{X})$.
\\
These statements correspond, respectively, to
Proposition~\ref{prop:excess-demand-overdemanded}(4)
and 
Proposition~\ref{prop:uniq-max-excess-minimizer}
on the Lyapunov function $L$
for the unit-demand model in \eqref{eqn:Lyap-unit},
where 
Proposition~\ref{prop:excess-demand-cond}
allows us to replace ``excess-demand set'' in those propositions
with ``$(L,p)$-minimal set.''
For completeness, the proofs of (1) and (2) are given in Appendix. 
\qed
\end{remark}

\begin{remark} \rm
\label{rem:runningtimeL}
The running time of the 
{\sc \Lnat-convex Minimization}
algorithm 
using a $(g,p)$-minimal set $X$ in Step~2
is discussed here.
The number of iterations (updates) lies between 
$\| p^\circ - \pminminL \|_{\infty}$
and
$\| p^\circ - \pminminL \|_{1}$ $(\le n \cdot \| p^\circ - \pminminL \|_{\infty})$.
If, in each iteration,
$X$ is 
the maximal $(g,p)$-minimal set 
(i.e., the unique minimal minimizer of $g(p+\chi_X)$; see Remark~\ref{rem:gpminfam-unimax}),
the number of iterations 
equals the lower bound
$\| p^\circ - \pminminL \|_{\infty}$;
see \cite{MSY16}.
In each iteration, we have to find a $(g,p)$-minimal set.
We can compute the maximal $(g,p)$-minimal set in polynomial time
via submodular function minimization.
We can also compute a minimal $(g,p)$-minimal set in polynomial time
with $O(n\sp{2})$ 
applications of submodular function minimization.
It is not known whether we can compute faster
some other $(g,p)$-minimal set
(without imposing maximality or minimality).
\qed
\end{remark}

\section{Lyapunov function minimization for multi-demand model}
\label{sec:Lyap-multi}


The objective of this section is to complete the proofs 
of Theorems \ref{thm:min-overdemand-multi} and \ref{thm:excess-multi}
on the ascending auction algorithm for the multi-demand model.
Our approach is to extend
the dual algorithmic view,
namely, the equivalence of 
 {\sc Ascending Auction}  
and
 {\sc Lyapunov Minimization} 
algorithms discussed in Section~\ref{sec:Lyap-unit}
for the unit-demand model
to that for the multi-demand model.
Technically, we rely on the general results on 
{\sc \Lnat-convex Minimization} algorithm
obtained in Section~\ref{sec:Lnat-minimize}.

\subsection{{\sc Lyapunov Minimization} for multi-demand model}
\label{sec:Lyap-alg-multi}


For the multi-demand model,
the \textit{Lyapunov function}  
is defined as 
\begin{equation}
\Lm(p) =
\sum_{b \in M} \max_{x \in [\0, u]_\Z}\{v_b(x) - p^\top x \} + p^\top u
\qquad (p \in \Z^n_+),
\label{eqn:def-Lyap-multi}
\end{equation}
where $p^\top x = \sum_{i =1}\sp{n} p(i) x(i)$
and $p^\top u = \sum_{i=1}^n  p(i)u(i)$.
When the bidders' valuation functions
enjoy \Mnat-concavity (strong gross substitutability),
this Lyapunov function is \Lnat-convex and captures the equilibrium prices as its minimizers,
as follows.

\begin{prop}[\cite{Aus06,MSY13,MSY16,SY09}]
\label{prop:multi-Lyap-prop}
 Let $L: \Z^n_+ \to \Z$ 
be the Lyapunov function in \eqref{eqn:def-Lyap-multi} for the multi-demand model 
and assume that the valuation functions $v_b$ are \Mnat-concave for all $b \in M$.
\\
{\rm (1)}
The Lyapunov function $L$  is \Lnat-convex.
\\
{\rm (2)}
$p \in \Z^n_+$ is an  equilibrium price vector if and only if 
it is a  minimizer of $L$.
In particular,
$p \in \Z^n_+$ is the unique minimal equilibrium price vector
if and only if 
it is the unique minimal minimizer of $L$.
\end{prop}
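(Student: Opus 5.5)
The plan is to treat the two parts separately. Part (1) follows from the conjugacy theorem of discrete convex analysis plus closure properties of \Lnat-convexity. Part (2) follows from a weak-duality inequality between $L$ and social welfare, combined with the existence of an equilibrium under \Mnat-concavity (Milgrom--Strulovici, Ausubel), which Section~\ref{sec:model-multi} takes as known.

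For (1), write $L(p)=\sum_{b\in M} h_b(p) + p^\top u$, where $h_b(p)=\max_{x\in[\0,u]_\Z}\{v_b(x)-p^\top x\}$. Extend $v_b$ by $-\infty$ outside $[\0,u]_\Z$. Then $-v_b$ is an integer-valued \Mnat-convex function with bounded effective domain. The function $h_b$ is exactly its convex conjugate $(-v_b)^\bullet$ evaluated at $p$, i.e., $\sup_x\{p'^\top x - (-v_b)(x)\}$ with $p'=-p$. By the \Mnat/\Lnat conjugacy theorem (\cite[Theorem 8.12]{Murota03book}), the conjugate of an integer-valued \Mnat-convex function is an integer-valued \Lnat-convex function on $\Z^n$. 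Composing with $p\mapsto -p$ preserves \Lnat-convexity, since the defining inequality \eqref{eqn:def-Lnat} is symmetric under this reflection; so each $h_b$ is \Lnat-convex.

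The sum of the $h_b$ is handled through the discrete midpoint convexity characterization $g(p)+g(q)\ge g(\lceil (p+q)/2\rceil)+g(\lfloor (p+q)/2\rfloor)$ of \Lnat-convexity. This inequality is obviously preserved under addition, so $\sum_b h_b$ is \Lnat-convex. The linear term $p^\top u$ and the indicator of $\Z^n_+$ also satisfy it, so restricting to $\dom L=\Z^n_+$ keeps $L$ \Lnat-convex. The one delicate point in (1) is the appeal to conjugacy, and we simply invoke the standard theorem for it.

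For (2), first prove weak duality. For any $p\in\Z^n_+$ and any allocation $(x_b)$ with $\sum_b x_b=u$,
\[
L(p)\ \ge\ \sum_b\bigl(v_b(x_b)-p^\top x_b\bigr)+p^\top u=\sum_b v_b(x_b),
\]
with equality iff $x_b\in D_b(p)$ for all $b$. This has two consequences. If $(p^*,x^*)$ is an equilibrium, then $L(p^*)=\sum_b v_b(x_b^*)\le L(q)$ for every $q$, so $p^*$ minimizes $L$; moreover $x^*$ maximizes welfare, and $\min L = W$ (the maximum welfare). Since an equilibrium exists under \Mnat-concavity, this equality holds.

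For the converse, let $p$ minimize $L$, so $L(p)=W$. Take a welfare-maximizing allocation $x^*$. Then equality holds in the weak-duality inequality, which forces $x_b^*\in D_b(p)$ for all $b$, so $(p,x^*)$ is an equilibrium. Hence the set of equilibrium prices equals $\arg\min L$. The final claim about the minimal elements is then immediate, since the two sets coincide. The uniqueness of the minimal minimizer follows from the submodularity \eqref{eqn:subm-Lnat} implied by (1), together with $\arg\min L\subseteq\Z^n_+$ being bounded below.
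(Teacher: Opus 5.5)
Your proof is correct. The paper gives no argument of its own here and only cites Ausubel and Murota--Shioura--Yang, and your route is the standard proof behind those citations: for (1), the \Mnat-concave/\Lnat-convex conjugacy for each indirect utility $h_b$, plus closure of \Lnat-convexity under sums, linear terms and restriction to $\Z^n_+$; for (2), the weak-duality bound $L(p)\ge\sum_b v_b(x_b)$ plus the known existence of an equilibrium.

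The one imported fact worth stating explicitly is that the existence theorem must supply an equilibrium price in $\Z^n_+$, since $L$ is minimized only over $\Z^n_+$; the paper's convention of defining $D_b(p)$ only for integral $p$ builds this in. With that in place, your part (2) uses \Mnat-concavity only through existence, and you correctly get uniqueness of the minimal element from the submodularity in (1).
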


Given the general results 
(Section~\ref{sec:Lnat-minimize})
on \Lnat-convex function minimization,
Proposition~\ref{prop:multi-Lyap-prop} naturally suggests that we
apply the {\sc Lyapunov Minimization} algorithm 
to the function $L$ 
in \eqref{eqn:def-Lyap-multi}.
In this case, the key condition
\eqref{eqn:set-cond3-Lnat} 
for the choice of $X$ in Step~2 of the {\sc \Lnat-convex Minimization} algorithm
reads
\begin{equation}
\label{eqn:set-cond3-Lyap-multi} 
\Lm(p + \chi_Y) > \Lm(p+ \chi_X) \qquad (\forall Y \subsetneq X).
\end{equation}
Recall that a nonempty set $X$ satisfying this condition is called $(L,p)$-minimal.

Under the assumption of \Mnat-concavity of valuation functions,
the Lyapunov function $L$ is \Lnat-convex
by Proposition~\ref{prop:multi-Lyap-prop}(1).
Then we can use  Theorem~\ref{thm:excess-Lnat}
for general \Lnat-convex minimization
to obtain the following theorem,
stating that the output of 
the algorithm {\sc Lyapunov Minimization} 
is indeed the unique minimal minimizer $\pminmin$  of $L$.

\begin{theorem}
\label{thm:excess-Lyap-multi}
The algorithm {\sc Lyapunov Minimization}
applied to the Lyapunov function $\Lm$
in \eqref{eqn:def-Lyap-multi}
for the multi-demand model with \Mnat-concave valuations
 finds the unique minimal minimizer  $\pminmin$ of $\Lm$
if, in each iteration, the item set $X$ selected in Step~2
is $(L,p)$-minimal
in the sense of \eqref{eqn:set-cond3-Lyap-multi},
which is the case if
\\
{\rm (i)}
$X$ is a minimal subset of $N$ satisfying 
$\Lm(p+\chi_X) < \Lm(p)$, or 
\\
{\rm (ii)}
$X$ is the unique minimal minimizer of $\Lm(p+\chi_X) - \Lm(p)$.
\end{theorem}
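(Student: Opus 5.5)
This is a direct specialization of Theorem~\ref{thm:excess-Lnat} to $g = \Lm$, so the plan is to check that $\Lm$ meets the standing hypotheses of Section~\ref{sec:Lnat-minimize} and then quote that theorem. Extend $\Lm$ to all of $\Z^n$ by setting $\Lm(p)=+\infty$ for $p \notin \Z^n_+$, so that $\dom \Lm = \Z^n_+$. By Proposition~\ref{prop:multi-Lyap-prop}(1), $\Lm$ is \Lnat-convex. The statement there is for $\Lm$ on $\Z^n_+$; its extension by $+\infty$ stays \Lnat-convex because $\Z^n_+$ is an \Lnat-convex set, being closed under $(p+\lambda\1)\wedge q$ and $p\vee(q-\lambda\1)$ for $p,q\ge \0$ and $\lambda \ge 0$.

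Next I check that $\arg\min \Lm$ is nonempty and bounded from below. Under \Mnat-concavity an equilibrium exists, as recalled in Section~\ref{sec:model-multi} via \cite{MiSt09,Aus06}. By Proposition~\ref{prop:multi-Lyap-prop}(2), equilibrium prices are exactly the minimizers of $\Lm$, so $\arg\min \Lm \neq \emptyset$. Boundedness from below is automatic since $\arg\min \Lm \subseteq \dom \Lm = \Z^n_+$. By Proposition~\ref{prop:multi-Lyap-prop}(2) again, the unique minimal minimizer $\pminminL$ of $g=\Lm$ is exactly the minimal equilibrium price $\pminmin$.

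With these in place, {\sc Lyapunov Minimization} applied to $\Lm$ is literally {\sc \Lnat-convex Minimization} with $g=\Lm$. Its initial condition $p^\circ\le\pminmin$, $p^\circ\in\Z^n_+$ is the same as $p^\circ \in \dom g$, $p^\circ \le \pminminL$. Condition \eqref{eqn:set-cond3-Lyap-multi} is \eqref{eqn:set-cond3-Lnat} for $g=\Lm$. Theorem~\ref{thm:excess-Lnat} then says the algorithm terminates at $\pminminL=\pminmin$ whenever each chosen $X$ is $(\Lm,p)$-minimal. Termination is also needed: each step strictly decreases the integer-valued $\Lm$, and by Lemma~\ref{lem:donotexceed} the iterates stay within $[p^\circ,\pminmin]$, so the process is finite.

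For cases (i) and (ii), I show directly that each choice is $(\Lm,p)$-minimal. In case (i), a proper subset $Y \subsetneq X$ with $\Lm(p+\chi_Y)\le \Lm(p+\chi_X)<\Lm(p)$ would be nonempty, since $Y=\emptyset$ gives $\Lm(p)$, and would contradict the minimality of $X$. In case (ii), $X$ minimizes $\Lm(p+\chi_X)$, so $\Lm(p+\chi_Y)\ge \Lm(p+\chi_X)$; equality for some $Y\subsetneq X$ would contradict $X$ being the minimal minimizer, and the minimal minimizer is unique by submodularity. Also $X\neq\emptyset$ in Step~2, because a descent set exists there. I see no real obstacle in any of this. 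The only point needing care is the domain extension and its \Lnat-convexity, which is routine.
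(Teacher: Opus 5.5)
Your proposal is correct and follows the paper's route: the paper obtains this theorem in one line by invoking Theorem~\ref{thm:excess-Lnat} with $g=\Lm$, using the \Lnat-convexity from Proposition~\ref{prop:multi-Lyap-prop}(1). Your additional checks are exactly the hypotheses the paper leaves implicit, and all are sound: the $+\infty$ extension off $\Z^n_+$, nonemptiness and lower-boundedness of $\arg\min \Lm$, the identification $\pminminL=\pminmin$ via Proposition~\ref{prop:multi-Lyap-prop}(2), finiteness, and the direct verification of cases (i) and (ii).
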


\subsection{{\sc Ascending Auction} for multi-demand model}
\label{sec:ascend-alg-multi}


In this section we translate the {\sc Lyapunov Minimization} algorithm
in Section~\ref{sec:Lyap-alg-multi}
to the form of {\sc Ascending Auction}
by characterizing the deficiency, overdemanded sets, and excess-demand sets
in terms of the Lyapunov function $L$.

First we consider overdemanded sets
and deficiency, which are defined, respectively, by
\begin{align*}
& \sum_{b \in M} \mu_b(X;p) > u(X),
& \cdots \ \eqref{eqn:overdemand-multi-def}
\\
& \delta(X;p)=
\sum_{b \in M} \mu_b(X;p) - u(X).
& \cdots \  \eqref{eqn:deficiency-multi-def}
\end{align*}

The next proposition connects
the deficiency $\delta(X;p)$
to the value of $L(p+ \chi_X) - L(p)$.

\begin{prop}[{\cite[Lemma 1.1]{MSY13}; see also \cite{Aus06}}]
\label{prop:mLyap-diff}
For $p \in \Z^n_+$ and $X\subseteq N$, it holds that
\begin{equation}
\label{eqn:Lyap-diff-multi} 
  L(p+ \chi_X) - L(p)
 = u(X) - \sum_{b \in M} \mu_b(X;p)
\ \  (= -\delta(X;p) ).
\end{equation}
\end{prop}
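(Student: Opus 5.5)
The identity $L(p+\chi_X) - L(p) = u(X) - \sum_{b} \mu_b(X;p)$ splits over bidders, since the term $p^\top u$ contributes exactly $\chi_X^\top u = u(X)$. So the plan is to prove, for each bidder $b$,
\begin{equation*}
V_b(p+\chi_X) - V_b(p) = -\mu_b(X;p), \qquad V_b(q) = \max_{x \in [\0,u]_\Z}\{v_b(x) - q^\top x\}.
\end{equation*}
Summing over $b \in M$ and adding $u(X)$ then gives \eqref{eqn:Lyap-diff-multi}. No \Mnat-concavity should be needed; the argument uses only integrality of $v_b$ and $p$.

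The inequality $V_b(p+\chi_X) \ge V_b(p) - \mu_b(X;p)$ is immediate. Take $y \in D_b(p)$ attaining $\mu_b(X;p) = y(X)$. Then
\begin{equation*}
V_b(p+\chi_X) \ge v_b(y) - p^\top y - y(X) = V_b(p) - \mu_b(X;p).
\end{equation*}

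For the reverse inequality, let $x$ be a maximizer defining $V_b(p+\chi_X)$. Then
\begin{equation*}
V_b(p+\chi_X) = v_b(x) - p^\top x - x(X).
\end{equation*}
There are two cases.

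\emph{Case $x \in D_b(p)$.} Then $x(X) \ge \mu_b(X;p)$, and the value above is at most $V_b(p) - \mu_b(X;p)$, as required.

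\emph{Case $x \notin D_b(p)$.} Integrality gives $v_b(x) - p^\top x \le V_b(p) - 1$. This yields only $V_b(p+\chi_X) \le V_b(p) - 1 - x(X)$, which is too weak if $\mu_b(X;p)$ is large. This case is the main obstacle.

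To handle it, I will compare bundles through the function
\begin{equation*}
\phi(y) = v_b(y) - p^\top y - y(X).
\end{equation*}
For every $y \in D_b(p)$ we have $\phi(y) = V_b(p) - y(X)$. The aim is to show that some maximizer of $\phi$ lies in $D_b(p)$, which reduces this case to the first one.

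A cleaner route is a parametric argument. The function $t \mapsto V_b(p+t\chi_X)$ for real $t \in [0,1]$ is a maximum of finitely many affine functions, hence convex and piecewise linear. Its right derivative at $t=0$ is $-\min\{y(X) \mid y \in D_b(p)\} = -\mu_b(X;p)$ by Danskin's theorem. Convexity then gives only the easy lower bound from before. So the parametric route alone does not close the gap; the reverse direction genuinely needs the structure of the maximization.

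The natural way out is that the breakpoints of $t \mapsto V_b(p+t\chi_X)$ could occur at non-integer $t$. Ruling this out requires that the slope $-\mu_b$ persists on all of $[0,1]$. This is where I expect to need \Mnat-concavity after all, in the form of the single-improvement or gross substitutes property of $D_b$: raising the prices in $X$ by one unit should keep some demanded bundle in $D_b(p)$ with minimal $X$-content optimal. Concretely, I would first try to show that a bundle $y \in D_b(p)$ minimizing $y(X)$ also lies in $D_b(p+\chi_X)$. I would argue by contradiction, applying (\Mnat-EXC) to $y$ and a hypothetical better $x$. Each exchange step moves one unit out of $X$ or into $\N\setminus X$ without increasing $p$-cost loss by more than the gain, and this contradicts either $y \in D_b(p)$ or the minimality of $y(X)$. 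This exchange step is the crux. Alternatively, I could simply cite \cite[Lemma 1.1]{MSY13}, as the statement indicates.
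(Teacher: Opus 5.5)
Your bookkeeping (the term $p^\top u$ contributes exactly $u(X)$) and the inequality $V_b(p+\chi_X)\ge V_b(p)-\mu_b(X;p)$ are correct. Your fallback of citing \cite[Lemma~1.1]{MSY13} is all the paper does; it gives no proof of its own. As a self-contained argument, however, the proposal has a gap at the only nontrivial step.

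First, your opening claim that integrality alone suffices is false. Take one item, $u=2$, $v(0)=v(1)=0$, $v(2)=1$, $p=0$, $X=\{1\}$. Then $D(0)=\{2\}$, $\mu=2$, $V(0)=1$ and $V(1)=0$, so the per-bidder difference is $-1\neq-2$. You later retract this claim. But its replacement, that a bundle $y\in D_b(p)$ minimizing $y(X)$ lies in $D_b(p+\chi_X)$, shown by applying (\Mnat-EXC) to $y$ and ``a hypothetical better $x$'', is left as ``the crux''. A single exchange between $y$ and an arbitrary better $x$ gives no contradiction by itself. You do not say which $x$ to take, nor how comparisons with distant bundles reduce to one-unit moves at $y$.

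To close the gap, put $\phi(z)=v_b(z)-p^\top z-z(X)$. This $\phi$ satisfies (\Mnat-EXC) because the linear terms cancel in the exchange.

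\emph{Local step.} For $y'=y-\chi_i+\chi_j\in[\0,u]_\Z$ with $i,j\in N\cup\{0\}$, let $\Delta=(v_b(y')-p^\top y')-(v_b(y)-p^\top y)$, an integer with $\Delta\le 0$. Then $\phi(y')-\phi(y)=\Delta-(y'(X)-y(X))$, which is $\le 0$ whenever $y'(X)\ge y(X)$. If $y'(X)=y(X)-1$, then $\Delta=0$ would put $y'$ in $D_b(p)$ with smaller $X$-content, contradicting the choice of $y$. So $\Delta\le-1$ and again $\phi(y')\le\phi(y)$. This is exactly where integrality and the minimality of $y(X)$ are used.

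\emph{Local to global.} Take a maximizer $x$ of $\phi$ with $\|x-y\|_1$ minimum, and suppose $x\neq y$. If some $i\in\suppp(y-x)$ exists, (\Mnat-EXC) gives $k\in\suppm(y-x)\cup\{0\}$ with $\phi(y)+\phi(x)\le\phi(y-\chi_i+\chi_k)+\phi(x+\chi_i-\chi_k)$. The local step then forces $\phi(x+\chi_i-\chi_k)\ge\phi(x)$, giving a maximizer strictly closer to $y$, a contradiction. If instead $y\le x$, take $i\in\suppp(x-y)$; then necessarily $k=0$, and the same argument applies to $x-\chi_i$. Hence $y$ maximizes $\phi$, so $V_b(p+\chi_X)=\phi(y)=V_b(p)-\mu_b(X;p)$.

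This second step is just the standard local optimality criterion for \Mnat-concave functions, which you may cite instead. The Danskin/parametric detour is not needed.
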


The following proposition is an immediate consequence of this relation.

\begin{prop}
\label{prop:mLyap-diff-cor}
Let $p \in \Z^n_+$ and $X\subseteq N$. 
\\
{\rm (1)}
$L(p+\chi_X) < L(p)$ holds 
if and only if
$X$ is overdemanded 
in the sense of \eqref{eqn:overdemand-multi-def}.
\\
{\rm (2)}
$X$ is a minimizer of $L(p+\chi_X) - L(p)$ 
if and only if
$X$ is a maximizer of deficiency 
$\delta(X;p)$ in \eqref{eqn:deficiency-multi-def}.
In particular, 
the (unique) minimal minimizer of $L(p+\chi_X) - L(p)$ 
is the (unique) minimal maximizer of the deficiency.
\end{prop}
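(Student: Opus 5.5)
Both parts are direct rewritings of the identity \eqref{eqn:Lyap-diff-multi} in Proposition~\ref{prop:mLyap-diff}, which I take as given. Fix $p \in \Z^n_+$. For every $Y \subseteq N$ it gives
\[
L(p+\chi_Y) - L(p) = -\delta(Y;p).
\]
So the map $Y \mapsto L(p+\chi_Y)-L(p)$ is exactly the negative of the deficiency as a set function on $2^N$. Each claim then follows by reading off signs and the order of values.

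For (1), apply the identity with $Y=X$. The inequality $L(p+\chi_X) < L(p)$ says $L(p+\chi_X)-L(p) < 0$, which is $-\delta(X;p) < 0$. That is $\delta(X;p) > 0$, i.e.\ $\sum_{b\in M}\mu_b(X;p) > u(X)$, which is the definition \eqref{eqn:overdemand-multi-def} of an overdemanded set.

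For (2), both functions are defined on the finite family $2^N$, so minimizers and maximizers exist. Negation reverses the order: $X$ minimizes $Y\mapsto L(p+\chi_Y)-L(p)$ if and only if $X$ maximizes $Y\mapsto\delta(Y;p)$. Hence the two families of optimal sets coincide as subfamilies of $2^N$, and so do their inclusion-minimal members.

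For the ``in particular'' clause I still need uniqueness of the minimal member. By Proposition~\ref{prop:multi-Lyap-prop}(1), $L$ is \Lnat-convex and hence submodular by \eqref{eqn:subm-Lnat}. For $Y,Z\subseteq N$ we have $(p+\chi_Y)\wedge(p+\chi_Z)=p+\chi_{Y\cap Z}$ and $(p+\chi_Y)\vee(p+\chi_Z)=p+\chi_{Y\cup Z}$. Therefore $Y\mapsto L(p+\chi_Y)$ is a submodular set function. The minimizers of a submodular set function form a lattice closed under $\cap$ and $\cup$: if $Y$ and $Z$ are minimizers, the submodular inequality forces both $Y\cap Z$ and $Y\cup Z$ to be minimizers. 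Intersecting all minimizers gives the unique minimal minimizer, and by the coincidence above it is also the unique minimal maximizer of the deficiency.

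No step here is a real obstacle. The only point needing care is this uniqueness justification, which rests on submodularity rather than on the difference identity alone.
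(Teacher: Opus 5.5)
Your proof is correct and takes the same route as the paper, which states the proposition as an immediate consequence of the identity \eqref{eqn:Lyap-diff-multi}. Your added argument for uniqueness of the minimal minimizer is sound. It uses submodularity of $Y \mapsto L(p+\chi_Y)$, which follows from \Lnat-convexity. This fills in what the paper leaves implicit behind the parenthetical ``(unique)'', and it is the same submodularity argument the paper uses in its Appendix.
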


We next connect excess-demand sets to the Lyapunov function.
Recall that 
a nonempty item set $X$ is called an excess-demand set
if the following condition is satisfied:
\begin{align*}
&  \sum_{b \in M} (\mu_b(X;p) - \mu_b(X\setminus Z;p)) > u(Z) 
 \qquad (\emptyset \ne  \forall Z \subseteq X).
& \cdots \ 
\eqref{eqn:excess-demand-multi-def}
\end{align*}

\begin{prop}
\label{prop:excess-demand-cond-multi} 
Let $p \in \Z^n_+$.
A set $X\subseteq N$  is an excess-demand set at price $p$
in the sense of \eqref{eqn:excess-demand-multi-def}
if and only if $X$ is $(L,p)$-minimal 
in the sense of \eqref{eqn:set-cond3-Lyap-multi}.
\end{prop}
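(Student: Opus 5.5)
The plan is to mirror the proof of Proposition~\ref{prop:excess-demand-cond} for the unit-demand model. The only change is that we use Proposition~\ref{prop:mLyap-diff} in place of Proposition~\ref{prop:Lyap-diff}. The key identity follows by applying \eqref{eqn:Lyap-diff-multi} twice, once with $X$ and once with $X\setminus Z$, and subtracting:
\begin{align*}
L(p+\chi_{X\setminus Z}) - L(p+\chi_X)
&= \bigl(u(X\setminus Z) - \textstyle\sum_{b\in M}\mu_b(X\setminus Z;p)\bigr) - \bigl(u(X) - \textstyle\sum_{b\in M}\mu_b(X;p)\bigr)\\
&= \textstyle\sum_{b\in M}\bigl(\mu_b(X;p)-\mu_b(X\setminus Z;p)\bigr) - u(Z),
\end{align*}
where $u(X)-u(X\setminus Z)=u(Z)$ because $Z\subseteq X$. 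Equivalently, the left-hand side equals $\delta(X;p)-\delta(X\setminus Z;p)$, which matches the reformulation \eqref{eqn:excess-demand-multi-def2}.

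Given this identity, condition \eqref{eqn:excess-demand-multi-def} holds for all nonempty $Z\subseteq X$ if and only if $L(p+\chi_{X\setminus Z}) > L(p+\chi_X)$ for all nonempty $Z\subseteq X$. The map $Z\mapsto Y=X\setminus Z$ is a bijection from the nonempty subsets of $X$ onto the proper subsets $Y\subsetneq X$. The latter condition is therefore exactly \eqref{eqn:set-cond3-Lyap-multi}.

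Nonemptiness of $X$ is part of both definitions, so nothing extra is needed there.

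There is no real obstacle. The only points needing care are the bookkeeping $u(X)-u(X\setminus Z)=u(Z)$ and the bijection between the ranges of $Z$ and $Y$. Both are routine. Note also that, unlike the unit-demand case, no analogue of \eqref{eqn:def-excess-demand3a} is required, since the multi-demand definition is already written in difference form.
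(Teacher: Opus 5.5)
Your proposal is correct and matches the paper's proof: the paper also rewrites \eqref{eqn:set-cond3-Lyap-multi} via the substitution $Y = X\setminus Z$, then applies \eqref{eqn:Lyap-diff-multi} to $X$ and to $X\setminus Z$ to obtain $L(p+\chi_{X\setminus Z}) - L(p+\chi_X) = \sum_{b\in M}(\mu_b(X;p)-\mu_b(X\setminus Z;p)) - u(Z)$. Your remarks on nonemptiness and on the bijection $Z \mapsto X\setminus Z$ only spell out what the paper calls obvious.
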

\begin{proof}
Obviously, 
\eqref{eqn:set-cond3-Lyap-multi}
is equivalent to 
\begin{equation}
\label{eqn:set-cond3-Lyap-multi2} 
L(p + \chi_{X\setminus Z}) > L(p+ \chi_X) \qquad (\emptyset \ne  Z \subseteq X).
\end{equation}
By \eqref{eqn:Lyap-diff-multi} we have
\begin{align*}
& L(p + \chi_{X\setminus Z}) - L(p+ \chi_X)
\\
&
= [u(X\setminus Z) - \sum_{b \in M} \mu_b(X\setminus Z;p)]
-
[u(X) - \sum_{b \in M} \mu_b(X;p)]
\\
&
= - u(Z) 
+ \sum_{b \in M} (\mu_b(X;p) - \mu_b(X\setminus Z;p)).
\end{align*}
 Hence, \eqref{eqn:set-cond3-Lyap-multi2} holds
if and only if 
the condition \eqref{eqn:excess-demand-multi-def}
for excess-demand sets is satisfied.
\end{proof}

Using Propositions \ref{prop:mLyap-diff-cor} and \ref{prop:excess-demand-cond-multi},
we can translate Theorem \ref{thm:excess-Lyap-multi} to the following theorem
concerning the {\sc Ascending Auction} algorithm.

\begin{theorem}
\label{thm:min-overdemand-excess-multi}
 The algorithm {\sc Ascending Auction} 
applied to the multi-demand model 
with \Mnat-concave valuations
finds the unique minimal equilibrium price vector  $\pminmin$
if, in each iteration, the item set $X$ selected in Step~2
is an excess-demand set
in the sense of \eqref{eqn:excess-demand-multi-def},
which is the case if
\\
{\rm (i)}
$X$ is a minimal overdemanded set 
\eqref{eqn:overdemand-multi-def}
at price~$p$, or 
\\
{\rm (ii)}
$X$ is the unique minimal maximizer of the deficiency $\delta(X;p)$
in \eqref{eqn:deficiency-multi-def}. 
\end{theorem}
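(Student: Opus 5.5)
The plan is to translate Theorem~\ref{thm:excess-Lyap-multi} into auction language, step by step, using the dictionary in Propositions~\ref{prop:mLyap-diff-cor} and \ref{prop:excess-demand-cond-multi}.

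\textbf{Step 1: the two algorithms coincide.} Proposition~\ref{prop:mLyap-diff-cor}(1) says that $X$ is overdemanded at $p$ if and only if $L(p+\chi_X)<L(p)$. Hence the stopping test in Step~1 of {\sc Ascending Auction} (no overdemanded set) is the same as the stopping test of {\sc Lyapunov Minimization}. Likewise, the set chosen in Step~2 is admissible in one algorithm exactly when it is admissible in the other. The update $p:=p+\chi_X$ is identical in both. Finally, by Proposition~\ref{prop:multi-Lyap-prop}(2), the initial condition $p^\circ\le\pminmin$ has the same meaning in both algorithms. So the two algorithms generate the same price trajectory whenever they select the same sets.

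\textbf{Step 2: the main claim.} Suppose that in every iteration $X$ is an excess-demand set in the sense of \eqref{eqn:excess-demand-multi-def}. By Proposition~\ref{prop:excess-demand-cond-multi}, $X$ is then $(L,p)$-minimal. Theorem~\ref{thm:excess-Lyap-multi} therefore shows that the run terminates at the minimal minimizer of $L$. By Proposition~\ref{prop:multi-Lyap-prop}(2), this minimizer is the unique minimal equilibrium price vector $\pminmin$. Termination itself is guaranteed inside Theorem~\ref{thm:excess-Lnat}: by Lemma~\ref{lem:donotexceed}, $p$ never exceeds $\pminmin$, and each iteration strictly increases $p$.

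\textbf{Step 3: cases (i) and (ii).} These reduce to showing that the selected $X$ is $(L,p)$-minimal.
\begin{itemize}
\item[(i)] Let $X$ be a minimal overdemanded set. By Proposition~\ref{prop:mLyap-diff-cor}(1), $X$ is a minimal set with $L(p+\chi_X)<L(p)$, so Theorem~\ref{thm:excess-Lyap-multi}(i) applies. To check $(L,p)$-minimality directly, take a proper subset $Y\subsetneq X$. By minimality, $Y$ is not overdemanded, so $L(p+\chi_Y)\ge L(p)>L(p+\chi_X)$.
\item[(ii)] Let $X$ be the unique minimal maximizer of $\delta(\cdot\,;p)$. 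By Proposition~\ref{prop:mLyap-diff-cor}(2), $X$ is the unique minimal minimizer of $L(p+\chi_X)-L(p)$, so Theorem~\ref{thm:excess-Lyap-multi}(ii) applies. Here any proper subset $Y$ has $L(p+\chi_Y)>L(p+\chi_X)$, because $Y$ is not a minimizer by minimality of $X$. We also need $X$ nonempty. In Step~2 an overdemanded set exists, so the maximal deficiency is positive, while $\delta(\emptyset;p)=0$; hence $X\neq\emptyset$.
\end{itemize}
By Proposition~\ref{prop:excess-demand-cond-multi}, in both cases $X$ is an excess-demand set.

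\textbf{Main obstacle.} The argument has no real difficulty, since the substantive work is already in Lemma~\ref{lem:donotexceed} and Lemma~\ref{lem:localopt}. The one delicate point is the boundary case (ii). There one must confirm that the minimal maximizer is nonempty and genuinely overdemanded, so that it is a legitimate choice in Step~2. This follows from the positivity of the maximal deficiency noted above.
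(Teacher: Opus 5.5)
Your proposal is correct and takes the paper's route. The paper obtains this theorem by translating Theorem~\ref{thm:excess-Lyap-multi} into auction terms through Propositions~\ref{prop:mLyap-diff-cor} and~\ref{prop:excess-demand-cond-multi}, exactly as you do, and your explicit checks of finite termination and of nonemptiness of the minimal maximizer in case~(ii) fill in details the paper leaves implicit.
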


The special cases (i) and (ii) of
Theorem \ref{thm:min-overdemand-excess-multi} above
correspond to
Theorem \ref{thm:min-overdemand-multi},
while the general case of
Theorem \ref{thm:min-overdemand-excess-multi}
is nothing but Theorem \ref{thm:excess-multi}. 
We have thus completed the proofs of the two main theorems announced in Section~\ref{sec:auction-multi}.

 The correspondence of the item sets used in
the {\sc Ascending Auction} algorithm
for the multi-demand model and the
{\sc Lyapunov Minimization} algorithm
applied to $L$ in \eqref{eqn:def-Lyap-multi}
 is summarized in
Table \ref{table:multi-demand}.

\begin{table} 
\begin{center}
\caption{
Item sets used in the ascending auction algorithm for the multi-demand model}
\label{table:multi-demand}

\medskip

\addtolength{\tabcolsep}{-2pt}
 \begin{tabular}{lcl}
 {\sc Ascending Auction}  &  & {\sc Lyapunov Minimization} 
\\
 \quad for multi-demand model &  & \quad applied to $L$ in \eqref{eqn:def-Lyap-multi}
\\ 
\hline
minimal overdemanded set \eqref{eqn:overdemand-multi-def} 
& $\Longleftrightarrow$ 
& minimal $X$ with 
\\ 
& Prop.~\ref{prop:mLyap-diff-cor}(1) 
& \quad $L(p+\chi_X) < L(p)$ 
\\  \hline
minimal maximizer of 
 & $\Longleftrightarrow$ 
 & minimal minimizer of 
\\ 
\quad  deficiency $\delta(X;p)$ in \eqref{eqn:deficiency-multi-def} & 
Prop.~\ref{prop:mLyap-diff-cor}(2) 
& \quad $L(p+\chi_X) - L(p)$ 
\\ \hline
 excess-demand set  \eqref{eqn:excess-demand-multi-def} & $\Longleftrightarrow$ 
&     $(L,p)$-minimal set, i.e., $X$  satisfying  
\\ 
 & Prop.~\ref{prop:excess-demand-cond-multi} 
&    \, \,   $L(p+\chi_Y) > L(p+ \chi_X)$ \   $(\forall Y \subsetneq X)$ 
\\ \hline
 \end{tabular}
\addtolength{\tabcolsep}{+2pt}
\end{center}
\end{table}

\appendix
\section{Appendix: Technical supplement to Remark \ref{rem:gpminfam-unimax}}
\label{sec:appendix}

In this appendix, we complete 
Remark~\ref{rem:gpminfam-unimax}
by proving the following proposition.

\begin{prop}
\label{prop:excess-Lnat-Rem53}
Let $\calF_p$ 
be the family of $(g,p)$-minimal sets
for an \Lnat-convex function $g: \Z^n \to \Rinf$.

\noindent
{\rm (1)} 
$\calF_p$ has a unique maximal set.

\noindent
{\rm (2)} 
The maximal member of $\calF_p$ coincides with the minimal minimizer of $g(p + \chi_{X})$.
\end{prop}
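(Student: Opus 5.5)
The plan is to work entirely with the set function $h(X) = g(p+\chi_X)$ on $2^N$, for fixed $p \in \dom g$. By the submodularity \eqref{eqn:subm-Lnat} of the \Lnat-convex function $g$, and since $(p+\chi_X)\wedge(p+\chi_Y) = p+\chi_{X\cap Y}$ and $(p+\chi_X)\vee(p+\chi_Y) = p+\chi_{X\cup Y}$, the function $h$ is submodular on $2^N$. This holds on $\{X \mid p+\chi_X \in \dom g\}$, which is a lattice by the same inequality, and $\emptyset$ belongs to it. Both (1) and (2) will follow at once if I show that the minimal minimizer $X^*$ of $h$ belongs to $\calF_p$ and contains every member of $\calF_p$. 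Then $X^*$ is the unique maximal (indeed the maximum) member of $\calF_p$.

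The first step is to fix $X^*$. Since $2^N$ is finite, $h$ has minimizers. For two minimizers $A$ and $B$, the inequality $h(A)+h(B) \ge h(A\cap B)+h(A\cup B)$ forces both $A\cap B$ and $A\cup B$ to be minimizers. Hence the intersection of all minimizers is a minimizer, namely the unique minimal minimizer $X^*$.

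Next I check that $X^* \in \calF_p$. For any $Y \subsetneq X^*$ we have $h(Y) \ge h(X^*)$, and equality would make $Y$ a minimizer strictly smaller than $X^*$, which is impossible. So $h(Y) > h(X^*)$ for all $Y \subsetneq X^*$, which is exactly \eqref{eqn:set-cond3-Lnat}. Nonemptiness of $X^*$ is the one degenerate point, and I would handle it as follows. If $\calF_p \neq \emptyset$, then some $X$ satisfies $h(X) < h(\emptyset)$ by \eqref{eqn:set-cond3-Lnat} with $Y=\emptyset$. Hence $\emptyset$ is not a minimizer and $X^* \neq \emptyset$. If $\calF_p = \emptyset$, the statement is vacuous, or is read with the convention that the maximal member is $\emptyset$; this is the situation in which $p$ passes the test in Step~1.

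The main step is to show $X \subseteq X^*$ for every $X \in \calF_p$. By submodularity,
\[
h(X) + h(X^*) \ge h(X\cap X^*) + h(X\cup X^*),
\]
and $h(X\cup X^*) \ge h(X^*)$ because $X^*$ minimizes $h$. Therefore $h(X) \ge h(X\cap X^*)$. If $X \not\subseteq X^*$, then $Y = X\cap X^*$ is a proper subset of $X$ with $h(Y) \le h(X)$, contradicting the $(g,p)$-minimality of $X$. Hence $X \subseteq X^*$. This, together with $X^* \in \calF_p$, yields (1) and (2) simultaneously.

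I expect no serious obstacle. The only care needed is the domain and nonemptiness bookkeeping noted above: the lattice property of the domain of $h$, and that $X^* \ne \emptyset$ whenever $\calF_p$ is nonempty.
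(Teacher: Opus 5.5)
Your proof is correct, but it is organized differently from the paper's.

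The paper first proves a separate lemma (Proposition~\ref{prop:excess-Lnat-union}). Its part (1) states that $\calF_p$ is closed under union, proved by two successive applications of submodularity of $\rho(Y)=g(p+\chi_Y)$, and the paper reads off (1) of the statement from this. For (2), it applies $\rho(X)+\rho(Z)\ge\rho(X\cap Z)+\rho(X\cup Z)$ only with $X=W$, the maximal member, and $Z$ the minimal minimizer. This gives $\rho(Z)>\rho(W\cup Z)$ when $W\setminus Z\neq\emptyset$, contradicting that $Z$ is a minimizer.

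Your main step uses the same inequality read the other way: you contradict the $(g,p)$-minimality of $X$ rather than the optimality of $X^*$. Because you apply it to an arbitrary $X\in\calF_p$, you get directly that $X^*$ belongs to $\calF_p$ and contains every member of $\calF_p$. So (1) becomes a corollary of (2), and the union-closedness lemma is not needed.

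The paper's route yields the stronger structural fact that $\calF_p$ is union-closed, the \Lnat-convex analogue of Proposition~\ref{prop:excess-demand-overdemanded}(4). Your argument does not give this: two members of $\calF_p$ both lie in $X^*$, but you do not show their union is in $\calF_p$.

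Your route is shorter and more self-contained. It also explicitly handles the fact that $X^*\neq\emptyset$ exactly when $\calF_p\neq\emptyset$. The paper passes over this point when it asserts that the minimal minimizer belongs to $\calF_p$, and its deduction of (1) also tacitly needs $\calF_p\neq\emptyset$.
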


The following proposition serves as a technical basis for the proof of this proposition.

\begin{prop}
\label{prop:excess-Lnat-union}
Let $\calF_p$ 
be the family of $(g,p)$-minimal sets
for an \Lnat-convex function $g: \Z^n \to \Rinf$.

\noindent
{\rm (1)} 
If $X \in \calF_p$ and $Z \in \calF_p$,
then $X \cup Z \in \calF_p$.

\noindent
{\rm (2)} 
If $X \in \calF_p$, $Z \subseteq N$, and $X\setminus Z \ne \emptyset$,
 then $ g(p+ \chi_{Z}) > g(p+ \chi_{X \cup Z})$. 
\end{prop}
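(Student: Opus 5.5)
The plan is to reduce everything to submodularity of the set function $h(A) = g(p+\chi_A)$ for $A \subseteq N$. By \eqref{eqn:subm-Lnat}, applied to $p+\chi_A$ and $p+\chi_B$ with $(p+\chi_A)\wedge(p+\chi_B) = p+\chi_{A\cap B}$ and $(p+\chi_A)\vee(p+\chi_B) = p+\chi_{A\cup B}$, we have
\[
h(A)+h(B) \ \ge\ h(A\cap B)+h(A\cup B) \qquad (A,B\subseteq N).
\]
I will prove (2) first, since (1) follows from it by a short chaining argument.

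For (2), take $X\in\calF_p$ and $Z\subseteq N$ with $X\setminus Z\neq\emptyset$, and apply submodularity with $A=X$, $B=Z$:
\[
h(X)+h(Z)\ \ge\ h(X\cap Z)+h(X\cup Z).
\]
Since $X\setminus Z\ne\emptyset$, the set $X\cap Z$ is a proper subset of $X$. Then \eqref{eqn:set-cond3-Lnat} gives $h(X\cap Z)>h(X)$, and subtracting yields $h(Z)>h(X\cup Z)$.

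The only delicate point, which I expect to be the main obstacle, is the bookkeeping with the value $+\infty$. Finiteness of $h(X)$ is not an issue: $(g,p)$-minimality with $Y=\emptyset$ gives $h(X)<g(p)$, and $p\in\dom g$ throughout the algorithm. If $h(Z)<+\infty$, both points lie in $\dom g$, so the right-hand side is finite and the cancellation is legitimate. If $h(Z)=+\infty$, I would use the fact that $\dom g$ is an \Lnat-convex set, hence closed under $\vee$, to argue that $h(X\cup Z)$ is finite whenever needed. Otherwise I would restrict the statement to $Z$ with $p+\chi_Z\in\dom g$, which is all that the application to (1) requires, because there $Z$ will have finite $h$-value by the chain below.

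For (1), let $X,Z\in\calF_p$ and let $Y\subsetneq X\cup Z$; I need $h(Y)>h(X\cup Z)$. Since $Y$ does not contain $X\cup Z$, at least one of $X\setminus Y$ and $Z\setminus Y$ is nonempty; by symmetry assume $X\setminus Y\ne\emptyset$. Applying (2) to the minimal set $X$ and the set $Y$ gives $h(Y)>h(X\cup Y)$. If $X\cup Y=X\cup Z$, we are done. Otherwise $Z\setminus(X\cup Y)\ne\emptyset$, and applying (2) to the minimal set $Z$ and the set $X\cup Y$ gives $h(X\cup Y)>h(X\cup Y\cup Z)=h(X\cup Z)$. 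Chaining the two inequalities gives $h(Y)>h(X\cup Z)$. Finally, $X\cup Z$ is nonempty, so $X\cup Z\in\calF_p$.
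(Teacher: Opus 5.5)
Your argument is correct in substance and is essentially the paper's. (2) is proved exactly as there. For (1), the paper adds the two submodular inequalities $h(Y)+h(X)\ge h(Y\cap X)+h(Y\cup X)$ and $h(Y\cup X)+h(Z)\ge h((Y\cup X)\cap Z)+h(X\cup Z)$. That is your chain, with the first link packaged as (2). The only difference is that the paper bounds $h((Y\cup X)\cap Z)-h(Z)\ge 0$ instead of splitting on whether $X\cup Y=X\cup Z$.

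What needs repair is your $+\infty$ bookkeeping, which the paper passes over.

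\textbf{Option 1 fails.} Your first option for $h(Z)=+\infty$ cannot work, because $\vee$-closure of $\dom g$ needs both $p+\chi_X$ and $p+\chi_Z$ in $\dom g$. In fact (2) is false in that case. Take $n=2$, $p=(0,0)$, and $g(q)=-q(1)$ on $\{q\mid 0\le q(1)\le 1,\ q(2)=0\}$, $+\infty$ elsewhere. Then $X=\{1\}$ is $(g,p)$-minimal, and with $Z=\{2\}$ both sides of (2) equal $+\infty$. So (2) must be read with $p+\chi_Z\in\dom g$. This is the only way the paper uses it, with $Z$ a minimizer of $A\mapsto g(p+\chi_A)$.

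\textbf{The restricted (2) does not directly cover (1).} Your claim that the restricted version suffices for (1) is also off. In the first application the role of $Z$ is played by the arbitrary set $Y$, and $h(Y)$ may be $+\infty$. Then $h(X\cup Y)$ may be $+\infty$ as well, and your first link reads $+\infty>+\infty$.

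\textbf{Repair.} Handle that case first. By minimality $h(X),h(Z)<+\infty$, so $h(X\cup Z)<+\infty$ by $\vee$-closure. Hence $h(Y)=+\infty$ gives $h(Y)>h(X\cup Z)$ at once. If $h(Y)<+\infty$, then $h(X\cup Y)<+\infty$ by $\vee$-closure too, and both applications of the restricted (2) are legitimate.
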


\begin{proof}
Let $\rho(Y)=g(p+\chi_Y)$ for $Y \subseteq N$. 

(1)
To prove $X \cup Z \in \calF_p$, 
we show the following inequality:
\begin{align}
& \rho(Y) > \rho(X \cup Z)  \qquad (\forall Y \subsetneq X  \cup Z).
\label{eqn:excess-Lnat-union:6}
\end{align}
Since 
$Y \subseteq X  \cup Z$ and $Y \ne X \cup Z$,
at least one of 
$Y \cap  X \subsetneq X$ and $Y \cap  Z \subsetneq Z$ holds.
By symmetry, we assume the former.
By submodularity of $\rho$, it holds that
\begin{align*}
& \rho(Y) + \rho(X)  + \rho(Z)
\notag\\
& \ge \rho(Y \cap X) + \rho(Y \cup X)  + \rho(Z)
\notag\\
& \ge \rho(Y \cap X) + \rho((Y \cup X)\cap Z)  + \rho(X \cup Z),
\end{align*}
from which it follows that
\begin{align}
& \rho(Y)  - \rho(X \cup Z)
\notag\\
& \ge [\rho(Y \cap X)  -  \rho(X) ]
+ [\rho((Y \cup X)\cap Z)  -  \rho(Z) ].
 \label{eqn:excess-Lnat-union:7}
\end{align}
Here we have 
$\rho(Y \cap X ) - \rho(X) >0$ 
since $Y \cap X \subsetneq X$ and $X \in \calF_p$.
 Similarly, 
we have
$\rho((Y \cup X)\cap Z)  -  \rho(Z)  \ge 0$ 
since $(Y \cup X)\cap Z \subseteq Z$ and $Z \in \calF_p$.
 Hence, the right-hand side of \eqref{eqn:excess-Lnat-union:7} is positive,
implying the desired inequality $\rho(Y)  > \rho(X \cup Z)$
in \eqref{eqn:excess-Lnat-union:6}.

(2)
 Since $X \in \calF_p$, we have
\begin{align}
& \rho(Y) > \rho(X)  \qquad (\forall Y \subsetneq X) .
\label{eqn:excess-Lnat-union:1}
\end{align}
\Lnat-convexity of $g$ implies that $\rho$ is submodular,
and therefore 
\begin{align}
 \label{eqn:excess-Lnat-union:3}
& \rho(X) + \rho(Z) \ge \rho(X \cap Z) + \rho(X \cup Z)
\end{align}
holds.
 Since $X \cap Z \subsetneq X$ by $X\setminus Z \ne \emptyset$,
 we have 
$\rho(X) < \rho(X \cap Z)$ by \eqref{eqn:excess-Lnat-union:1},
which, together with \eqref{eqn:excess-Lnat-union:3}, implies 
$\rho(Z) > \rho(X \cup Z)$, 
i.e.,
$ g(p+ \chi_{Z}) > g(p+ \chi_{X \cup Z})$.
\end{proof}

We are now ready to prove Proposition~\ref{prop:excess-Lnat-Rem53}.
The first claim (1) is immediate from Proposition~\ref{prop:excess-Lnat-union}(1).
The proof of the second claim (2) is as follows.
Let $Z$ be the minimal minimizer of $g(p + \chi_{X})$.
Then there exists no proper subset $Y$ of $Z$ satisfying 
$g(p + \chi_Y) \le g(p+ \chi_Z)$,
that is, $Z \in \calF_p$.
Let $W$ be the maximal member of $\calF_p$. 
Since $Z \in \calF_p$ 
and $W$ is maximal in $\calF_p$,
we have
$W \supseteq Z$.
If $W \setminus Z \ne \emptyset$,
we must have
$g(p+ \chi_{Z}) > g(p+ \chi_{W \cup Z})$
by Proposition~\ref{prop:excess-Lnat-union}(2).
This contradicts the definition of
$Z$ that $Z$ is a minimizer of $g(p + \chi_{X})$.
Hence $W = Z$.
This completes the proof of Proposition~\ref{prop:excess-Lnat-Rem53}.

\noindent {\bf Acknowledgement}.
This work was supported by JSPS/MEXT KAKENHI JP23K11001 and JP23K10995,
and by JST ERATO Grant Number JPMJER2301, Japan.

\end{document}